\begin{document}

\title{ Generalizations of prime submodules over non-commutative rings
}


\author{Emel Aslankarayigit Ugurlu        
}


\institute{E. Aslankarayigit Ugurlu \at
             Department of Mathematics, Marmara University, Istanbul, Turkey. \\ \email{emel.aslankarayigit@marmara.edu.tr}           
     }

\date{Received: 01 March 2020 / Accepted: date}

\maketitle

\begin{abstract}
Throughout this paper, $R$ is an \textit{associative ring (not necessarily
	commutative) with identity} and $M$ is a right $R$-module with unitary. In
this paper, we introduce a new concept of $\phi$\textit{-prime submodule over
	an associative ring} \textit{with identity}. Thus we define the concept as
following: Assume that $S(M)$ is the set of all submodules of $M$ and
$\phi:S(M)\rightarrow S(M)\cup\{\emptyset\}$ is a function. For every $Y\in
S(M)$ and ideal $I$ of $R,$ a proper submodule $X$ of $M$ is called
\textit{$\phi$-prime,} if $YI\subseteq X$ and $YI\nsubseteq\phi(X),$ then
$Y\subseteq X$ or $I\subseteq(X:_{R}M)\mathit{.}$ Then we examine the
properties of \textit{$\phi$-}prime submodules and characterize it when $M$ is
a \textit{multiplication module.}

\keywords{ $\phi-$prime submodule \and non-commutative ring \and multiplication module.}
\subclass{16P40 \and 13A15 \and 16D60}
\end{abstract}

\section{Introduction}

\label{Sec:1}

Throughout this paper, $R$ is an associative ring (unless otherwise stated,
not necessarily commutative) with identity and $M$ is a right $R$-module with
unitary. Suppose that $M$ is an $R$-module, $S(M)$ and $S(R)$ are the set of
all submodules of $M$, the set of all ideals of $R$, respectively. For an
ideal $A$ of $R$, we denote the set $\{t\in M:$ $tA\subseteq X\}$ as
$(X:_{M}A).$ One clearly proves that $(X:_{M}A)\in S(M)$ and $X\subseteq$
$(X:_{M}A).$ Also, for two subsets $X$ and $Y$ of $M$, the subset $\{r\in
R:Xr\subseteq Y\}$ of $R$ is denoted by $(Y:_{R}X).$ If $Y$ is a submodule of
$M$, then it is obviously proved that for any subset $X$ of $M$, the set
$(Y:_{R}X)$ is a right ideal of $R$. It is obtained $(Y:_{R}X)$ is an ideal of
$R$ for $X$,$Y\in S(M)$, see \cite{tug}. Thus, clearly one can see that
$(X:_{R}M)$ is an ideal of $R,$ for all $X\in S(M).$

\bigskip

A proper ideal $A$ of a commutative ring $R$ is \textit{prime }if whenever
$a_{1},a_{2}\in R$ with $a_{1}a_{2}\in A$, then $a_{1}\in A$ or $a_{2}\in A,$
\cite{atiyah}. In 2003, the authors \cite{AS} said that if whenever
$a_{1},a_{2}\in R$ with $0_{R}\neq a_{1}a_{2}\in A$, then $a_{1}\in A$ or
$a_{2}\in A,$ a proper ideal $A$ of a commutative ring $R$ is \textit{weakly
	prime}. In \cite{BS}, Bhatwadekar and Sharma defined a proper ideal $A$ of an
integral domain $R$ as \textit{almost prime (resp. $n$-almost prime)} if for
$a_{1},a_{2}\in R$ with $a_{1}a_{2}\in A-A^{2}$, (resp. $a_{1}a_{2}\in
A-A^{n},n\geq3$) then $a_{1}\in A$ or $a_{2}\in A$. This definition can be
made for any commutative ring $R$. Later, Anderson and Batanieh \cite{AB}
introduced a concept which covers all the previous definitions in a
commutative ring $R$ as following: Let $\phi:S(R)\rightarrow S(R)\cup
\{\emptyset\}$ be a function. A proper ideal $A$ of a commutative ring $R$ is
called $\phi$-\textit{prime} if for $a_{1},a_{2}\in R$ with $a_{1}a_{2}\in
A-\phi(A)$, then $a_{1}\in A$ or $a_{2}\in A.$

\bigskip

The notion of the prime ideal in a commutative ring $R$ is extended to modules
by several studies, \cite{D,L,MM}. For a commutative ring $R$, a proper $X\in
S(M)$ is said to be \textit{prime }\cite{Am}, if $ma\in X,$ then $m\in X$ or
$a\in(X:_{R}M),$ for $a\in R$ and $m\in M.$ In \cite{AT}, the authors
introduced weakly prime submodules over a commutative ring $R$ as following: A
proper submodule $X$ of $M$ is called \textit{weakly prime} if for $r\in R$
and $m\in M$ with $0_{M}\neq mr\in X$, then $m\in X$ or $r\in(X:_{R}M)$. Then,
N. Zamani \cite{Z} introduced the concept of $\phi$-prime submodules over a
commutative ring $R$ as following: Let $\phi:S(M)\rightarrow S(M)\cup
\{\emptyset\}$ be a function. A proper submodule $X$ of an $R$-module $M$ is
said to be \textit{$\phi$-prime} if $r\in R$, $m\in M$ with $mr\in X-\phi(X)$,
then $m\in X$ or $r\in(X:_{R}M)$. He defined the map $\phi_{\alpha
}:S(M)\rightarrow S(M)\cup\{\emptyset\}$ as follows:

\begin{itemize}
	\item[(1)] $\phi_{\emptyset}$ : $\phi(X)=\emptyset$ defines prime submodules.
	
	\item[(2)] $\phi_{0}$ : $\phi(X)=\{0_{M}\}$ defines weakly prime submodules.
	
	\item[(3)] $\phi_{2}$ : $\phi(X)=X(X:_{R}M)$ defines almost prime submodules.
	
	\item[(4)] $\phi_{n}$ : $\phi(X)=X(X:_{R}M)^{n-1}$ defines $n$-almost prime
	submodules$(n\geq2).$
	
	\item[(5)] $\phi_{\omega}$ : $\phi(X)=\cap_{n=1}^{\infty}X(X:_{R}M)^{n}$
	defines $\omega$-prime submodules.
	
	\item[(6)] $\phi_{1}$ : $\phi(X)=X$ defines any submodule.
\end{itemize}

\bigskip

On the other hand, in \cite{Beiranvand}, P. Karimi Beiranvand and R.
Beyranvand introduced the almost prime and weakly prime submodules over $R$
(not necessarily commutative) as following: A proper submodule $X$ of an
$R$-module $M$ is called \textit{almost}$\mathit{\ }$\textit{prime,} for any
ideal $I$ of $R$ and any submodule $Y$ of $M,$ if $YI\subseteq X$ and
$YI\nsubseteq X(X:_{R}M),$ then $Y\subseteq X$ or $I\subseteq(X:_{R}M)$. Also,
$X$ is called \textit{weakly}$\mathit{\ }$\textit{prime,} for any ideal $I$ of
$R$ and any submodule $X$ of $M,$ if $0_{M}\neq YI\subseteq X$, then
$Y\subseteq X$ or $I\subseteq(X:_{R}M)$. In the mentioned study, they obtain
some important results on the two submodules over $R$.

\bigskip

In any \textit{non-commutative ring,} T. Y. Lam \cite{lam} proved that an
ideal $A$ of $R$ is a prime ideal (i.e., for two ideals $I_{1}$, $I_{2}$ of
$R$, $I_{1}I_{2}$ $\subseteq A$ implies $I_{1}\subseteq A$ or $I_{2}$
$\subseteq A$ ) $\Longleftrightarrow$ for $a_{1},a_{2}\in R,$ $a_{1}a_{2}\in
A$ implies $a_{1}\in A$ or $a_{2}\in A.$ Similarly, for any module over any
\textit{non-commutative ring,} J. Dauns \cite{D} showed that for $M$ over $R,$
a proper $X$ $\in S(M)$ is prime (i.e., if $mRa\subseteq X,$ then $m\in X$ or
$a\in(X:_{R}M),$ for $a\in R$ and $m\in M$) $\Longleftrightarrow$ for an ideal
$A$ of $R$ and for a submodule $Y$ of $M,$ $YA\subseteq X$ implies $Y\subseteq
X$ or $A\subseteq(X:_{R}M).$

\bigskip

Moreover, note that in commutative ring theory, we know that there is a
relation between prime ideals and multiplicatively closed sets. Similarly, in
\textit{non-commutative ring theory}, there is a relation between prime ideals
and $m$-\textit{system} sets. In \cite{lam}, one can see that if for all
$x,y\in S,$ there exists $a\in R$ with $xay\in S,$ then $\emptyset\neq
S\subseteq R$ is called an $m$-system$.$ Also, T. Y. Lam \cite{lam} defined
the radical of an ideal $A$ of $R$ as: $\sqrt{A}=\{s\in R:$ every $m$-system
containing $s$ meets $A\}\subseteq\{s\in R:$ $s^{n}\in A$ for some $n\geq1\}.$
Then he proved that $\sqrt{A}$ equals the intersection of all prime ideals
containing $A$ and $\sqrt{A}$ is an ideal$,$ see, (10.7) Theorem in \cite{lam}.

\bigskip

Our aim in this paper, similar to \cite{Beiranvand}, to introduce the concept
of $\phi$-prime submodule over an associative ring (not necessarily
commutative) with identity. For this purpose, we define a $\phi$-prime
submodules over $R$. In Section 2, after the introducing of $\phi$-prime
submodules over $R$, in Theorem \ref{the main}, we characterize a $\phi$-prime submodule. Then with Theorem \ref{the def}, we give another equivalent definitions for $\phi$-prime submodule. Also, in the section some properties of the submodules are examined. In Theorem \ref{the cha}, another characterization of  $\phi$-prime submodule is
obtained. In Section 3, after a reminder about multiplication module, it is
shown that $X$ is $\phi$-prime $\Longleftrightarrow$ $Y_{1}Y_{2}\subseteq X$
and $Y_{1}Y_{2}\nsubseteq\phi(X)$ implies $Y_{1}\subseteq X$ or $Y_{2}%
\subseteq X,$ for $Y_{1}$, $Y_{2}\in S(M)$, see Corollary \ref{cor multip}.
Moreover, in Theorem \ref{the ideal iff}, for a multiplication module, under some conditions we prove that 	$X$ is $\phi$-prime in $M$ $\Longleftrightarrow$ $(X:_{R}M)$ is a $\psi$-prime ideal in $R.$ In Section 4,
with Definition \ref{def system}, we introduce a new concept which is called
$\phi$-$m$-$system.$ Then we show that in Proposition \ref{system}, for $X\in
S(M)$, $X$ is $\phi$-prime $\Longleftrightarrow$ $S=M-X$ is a $\phi$%
-$m$-system. Also, we examine some properties of the $\phi$-$m$-$system.$
Finally, with Definition \ref{def radical}, we introduce \textit{the radical
	of} $Y$ as $\sqrt{Y}:=\{x\in M:$ every $\phi$-$m$-system $S$ containing $x$
such that $\phi(Y)=\phi(<S^{c}>)$ meets $Y\},$ otherwise $\sqrt{Y}:=M,$ where
$S^{c}=M-S.$ As a final result, for the set $\Omega:=\{X_{i}\in S(M):X_{i}$ is
$\phi$-prime with $Y\subseteq X_{i}$ and $\phi(Y)=\phi(X_{i}),$ for
$i\in\Lambda$ $\},$ it is obtained that $\sqrt{Y}=\underset{X_{i}\in\Omega
}{\bigcap}X_{i},$ see Theorem \ref{the radical}.

\section{Properties of $\phi-$Prime submodules}

\label{Sec:2} Throughout our study, assume that $\phi:S(M)\rightarrow
S(M)\cup\{\emptyset\}$ is a function.

\begin{definition}
	For every $Y\in S(M)$ and $I\in S(R),$ a proper $X\in S(M)$ is said to be
	\textit{$\phi$-prime,} if $YI\subseteq X$ and $YI\nsubseteq\phi(X),$ then
	$Y\subseteq X$ or $I\subseteq(X:_{R}M)$\textit{.} We defined the map
	$\phi_{\alpha}:S(M)\rightarrow S(M)\cup\{\emptyset\}$ as follows:
\end{definition}

\begin{itemize}

	\item[(1)] $\phi_{\emptyset}$ : $\phi(X)=\emptyset$ defines prime submodules.
	
	\item[(2)] $\phi_{0}$ : $\phi(X)=\{0_{M}\}$ defines weakly prime submodules.
	
	\item[(3)] $\phi_{2}$ : $\phi(X)=X(X:_{R}M)$ defines almost prime submodules.
	
	\item[(4)] $\phi_{n}$: $\phi(X)=X(X:_{R}M)^{n-1}$ defines $n$-almost prime
	submodules$(n\geq2)$.
	
	\item[(5)] $\phi_{\omega}$ : $\phi(X)=\cap_{n=1}^{\infty}X(X:_{R}M)^{n}$
	defines $\omega$-prime submodules.
	
	\item[(6)] $\phi_{1}$ : $\phi(X)=X$ defines any submodule.
\end{itemize}

\bigskip In the above definition, if we consider $\phi:S(R)\rightarrow
S(R)\cup\{\emptyset\}$, we obtain the concept of $\phi$-prime ideal in an
associative ring (not necessarily commutative) with identity as following: For
every $I,J\in S(R),$ a proper $A\in S(R)$ is said to be \textit{$\phi$-prime,}
if $IJ\subseteq A$ and $IJ\nsubseteq\phi(A),$ then $I\subseteq A$ or
$J\subseteq A$\textit{.} For commutative case, this definition is equivalent
to the definition of $\phi$-prime ideal in a commutative ring, see the Theorem
13 in \cite{AB}.

\bigskip Notice that since $X-\phi(X)=X-(X\cap\phi(X))$, for any submodule $X$
of $M$, without loss of generality, suppose $\phi(X)\subseteq X.$ Let
$\psi_{1},$ $\psi_{2}:S(M)\rightarrow S(M)\cup\{\emptyset\}$ be two functions,
if $\psi_{1}(X)\subseteq\psi_{2}(X)$ for each $X\in S(M),$ we denote $\psi
_{1}\leq\psi_{2}.$ Thus clearly, we have the following order: $\phi
_{\emptyset}\leq\phi_{0}\leq\phi_{\omega}\leq...\leq\phi_{n+1}\leq\phi_{n}%
\leq...\leq\phi_{2}\leq\phi_{1}$. Whenever $\psi_{1}\leq\psi_{2}$, any
$\psi_{1}$-prime submodule is $\psi_{2}$-prime.

\bigskip

\begin{example}
	Let $p$ and $q$ be two prime numbers. Consider $%
	\mathbb{Z}
	-$module $%
	\mathbb{Z}
	_{pq}.$ The zero submodule is $\phi_{0}-$prime, but it is not $\phi
	_{\emptyset}-$prime. Moreover, in $%
	\mathbb{Z}
	-$module $%
	\mathbb{Z}
	_{pq^{2}},$ the submodule $q^{2}%
	\mathbb{Z}
	_{pq^{2}}$ is $\phi_{2}-$prime. However, since $q^{2}%
	\mathbb{Z}
	_{pq^{2}}(q^{2}%
	\mathbb{Z}
	_{pq^{2}}:_{%
		\mathbb{Z}
	}%
	\mathbb{Z}
	_{pq^{2}})=q^{2}%
	\mathbb{Z}
	_{pq^{2}},$ it is not $\phi_{0}-$prime.
\end{example}

\begin{example}
	Let $M$ be an $R$-module.
	
	\begin{enumerate}
		\item The zero submodule of $R$ is both $\phi_{0}-$prime submodule and
		$\phi_{2}-$prime submodule, on the other hand it may not be $\phi_{\emptyset
		}-$prime.
		
		\item If $\ M$ is a prime $R$-module and $N$ be a proper submodule of $M$.
		Then $N$ is $\phi_{\emptyset}-$prime if and only if $\phi_{0}-$prime.
		
		\item Let $M$ be a homogeneous semisimple $R$-module and $N$ be a proper
		submodule of $M$. Then since every proper submodule is prime, hence $N$ is
		prime, so is $\phi-$prime.
	\end{enumerate}
\end{example}

\begin{example}
	(Example 2.2 (f) in \cite{Beiranvand})Let $M=S_{1}%
	{\textstyle\bigoplus}
	S_{2},$ which $S_{1},S_{2}$ are simple $R$-module such that $S_{1}\ncong
	S_{2}$ and $N$ be a proper submodule of $M.$ Then since every non-zero proper
	submodule is prime, then $N$ is prime, so is $\phi-$prime. Indeed, assume that
	$0_{M}\neq X\in S(M)$ is proper and $YI\subseteq X$ where $Y\in S(M)$ and
	$I\in S(R).$ By Proposition 9.4 in \cite{FW}, we have $M/X\cong S_{1}$ or
	$M/X\cong S_{2}.$ Then $((Y+X)/X)I=0_{M}$ and as $(Y+X)/X\in S(M/X)$ and $M/X$
	is simple, we get $(Y+X)/X=0_{M}$ or $Ann((Y+X)/X)=Ann(M/X).$ This means that
	$Y+X=X$ or $(M/X)I=0_{M}.$ Consequently, $Y\subseteq X$ or $MI\subseteq X.$
\end{example}

Note that for an element $a$ of $R,$\ the ideal generated by $a$ in $R$ is
denoted by $RaR.$ Similarly, the right and left ideal generated by $a$ in $R$
are denoted by $aR,$ $Ra,$ respectively. Also, we denote the ideal generated
by $A$\ as $<A>,$ for a subset $A$ of $R.$ For an element $x$ of $M,$\ the
submodule generated by $x$ in $M$ is denoted by $xR.$ Finally, for a subset
$X$ of $M$, we denote the submodule generated by $X$ in $M$ as $<X>$.

\bigskip In the following Theorem, we obtain a characterization of a $\phi
$-prime submodule of $M$.

\begin{theorem}
	\label{the main} For a proper submodule $X$ of $M$, the followings are equivalent:
	
	\begin{enumerate}
		\item $X$ is a $\phi$-prime submodule of $M.$
		
		\item For all $m\in M-X$,
		
		$(X:_{R}mR)=(X:_{R}M)\cup(\phi(X):_{R}mR).$
		
		\item For all $m\in M-X$,
		
		$(X:_{R}mR)=(X:_{R}M)$ or $(X:_{R}mR)=(\phi(X):_{R}mR).$
	\end{enumerate}
\end{theorem}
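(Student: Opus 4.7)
The plan is to prove the equivalences cyclically as $(1)\Rightarrow(2)\Rightarrow(3)\Rightarrow(1)$.

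For $(1)\Rightarrow(2)$, fix $m\in M-X$. The containments $(X:_{R}M)\subseteq(X:_{R}mR)$ (since $mR\subseteq M$) and $(\phi(X):_{R}mR)\subseteq(X:_{R}mR)$ (since $\phi(X)\subseteq X$) are immediate. For the reverse inclusion I would take $r\in(X:_{R}mR)$ and apply the $\phi$-prime hypothesis to $Y=mR$ and $I=RrR$. First one checks that $mR\cdot RrR\subseteq X$ by rewriting a typical generator $m(r_{1}sr t)$ as $\bigl(m(r_{1}s)r\bigr)t$ and using $mRr\subseteq X$ together with the right $R$-action on $X$. Because $m\notin X$ forces $mR\not\subseteq X$, the $\phi$-prime property yields either $RrR\subseteq(X:_{R}M)$, whence $r\in(X:_{R}M)$, or $mR\cdot RrR\subseteq\phi(X)$; in the latter case $mr_{1}r=m(1\cdot r_{1}\cdot r)\cdot 1\in\phi(X)$ for every $r_{1}\in R$, giving $r\in(\phi(X):_{R}mR)$.

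For $(2)\Rightarrow(3)$ I appeal to the standard fact that an additive group cannot be the union of two proper subgroups: if an additive group is written as $H_{1}\cup H_{2}$ with $H_{1},H_{2}$ subgroups, then it coincides with one of them. Applied to the ideals $(X:_{R}M)$ and $(\phi(X):_{R}mR)$ inside $(X:_{R}mR)$, this delivers (3).

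The hardest direction is $(3)\Rightarrow(1)$. Assume $YI\subseteq X$, $YI\not\subseteq\phi(X)$ and $Y\not\subseteq X$; the goal is $I\subseteq(X:_{R}M)$. For any $m\in Y-X$ one has $mR\subseteq Y$, so $mRI\subseteq YI\subseteq X$ and therefore $I\subseteq(X:_{R}mR)$. By (3), either $(X:_{R}mR)=(X:_{R}M)$, in which case $I\subseteq(X:_{R}M)$ and we are done, or $(X:_{R}mR)=(\phi(X):_{R}mR)$. Suppose, for a contradiction, that the second alternative holds for \emph{every} $m\in Y-X$; then $mRI\subseteq\phi(X)$ for each such $m$. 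Choose $y\in Y$ and $i\in I$ with $yi\notin\phi(X)$. If $y\notin X$, then $y\in Y-X$, so $yi\in yRi\subseteq\phi(X)$, a contradiction. The genuinely subtle case is $y\in X$: here I fix some $m\in Y-X$ (available since $Y\not\subseteq X$); then $y+m\in Y-X$, so both $mi$ and $(y+m)i$ lie in $\phi(X)$, whence $yi=(y+m)i-mi\in\phi(X)$, again a contradiction.

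The main obstacle I anticipate is precisely this last additive-shift argument in $(3)\Rightarrow(1)$: the naive approach using a single witness $y\in Y$ breaks down when $y\in X$, and one must exploit the submodule structure of $Y$ to translate the witness into an element outside $X$ without leaving $Y$. The other direction $(1)\Rightarrow(2)$ is essentially a bookkeeping exercise about the ideal generated by a single element, and $(2)\Rightarrow(3)$ is purely group-theoretic.
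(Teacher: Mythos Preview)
Your proof is correct and follows the same cyclic scheme $(1)\Rightarrow(2)\Rightarrow(3)\Rightarrow(1)$ as the paper, with the same key ingredients: applying the $\phi$-prime condition to $mR$ and $RrR$ for $(1)\Rightarrow(2)$, the union-of-two-subgroups fact for $(2)\Rightarrow(3)$, and the additive shift $y\mapsto y+m$ with $m\in Y-X$ to handle the case $y\in Y\cap X$ in $(3)\Rightarrow(1)$. Your organization of $(3)\Rightarrow(1)$ is in fact slightly more economical than the paper's: by using the ideal-level containment $I\subseteq(X:_{R}mR)=(\phi(X):_{R}mR)$ directly, you avoid the paper's additional case split on whether an individual $r\in I$ lies in $(X:_{R}M)$ and the corresponding second shift $r\mapsto r+s$.
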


\begin{proof}
	$(1)\Longrightarrow(2)$ : Let $X$ be a $\phi$-prime submodule of $M.$ For all
	$m\in M-X$, choose $a\in(X:_{R}mR)-(\phi(X):_{R}mR).$ Then $(mR)(RaR)\subseteq
	X$ and $(mR)(RaR)\nsubseteq\phi(X).$ As $X$ is $\phi$-prime, one can see
	$mR\subseteq X$ or $RaR\subseteq(X:_{R}M).$ The first option gives us a
	contradiction. Thus $a\in(X:_{R}M).$ Moreover, as $\phi(X)\subseteq X,$ we
	always have $(\phi(X):_{R}mR)\subseteq(X:_{R}mR).$
	
	$(2)\Longrightarrow(3)$ : If an ideal is a union of two ideals, it equals to
	one of them.
	
	$(3)\Longrightarrow(1)$ : Choose $Y\in S(M)$\ and an ideal $I$ in $R$ which
	$YI\subseteq X$ and $I\nsubseteq(X:_{R}M)$, $Y\nsubseteq X.$ Let us prove
	$YI\subseteq\phi(X).$ For all $r\in I$ and $m\in Y,$ we have $mr\in
	YI\subseteq X.$ 
	
	Now, take $m\in Y-X.$ Then we have 2 cases:
	
	Case 1: $r\notin(X:_{R}M).$ Since $mr\in YI\subseteq X,$ one can see
	$(mR)r\subseteq YI\subseteq X,$ i.e., $r\in(X:_{R}mR).$ Thus $(X:_{R}%
	mR)=(\phi(X):_{R}mR)$ by our hypothesis (3). This means $r\in(\phi
	(X):_{R}mR),$ so, $mr\in\phi(X).$
	
	Case 2 : $r\in(X:_{R}M).$ Thus $r\in I\cap(X:_{R}M).$ Choose $s\in
	I-(X:_{R}M).$ Thus $r+s\in I-(X:_{R}M).$ Similar to Case 1, since
	$s\notin(X:_{R}M)$, one can see $ms\in\phi(X).$ By the same reason, as
	$r+s\notin(X:_{R}M)$, $m(r+s)\in\phi(X).$ Since $ms\in\phi(X),$ we obtain
	$mr\in\phi(X).$
	
	Now, let $m\in Y\cap X.$ Since $Y\nsubseteq X,$ there exists $m^{\ast}\in
	Y-X.$ By the above observations, $m^{\ast}r\in$ $\phi(X)$ and $(m+m^{\ast
	})r\in$ $\phi(X)$ (since $m+m^{\ast}\in Y-X$). This implies that $mr\in
	\phi(X).$
	
	Consequently, for every case we get $YI\subseteq\phi(X).$
\end{proof}

\begin{theorem}
	\label{the def}	For $X\in S(M)$, the items are equivalent:
	
	\begin{enumerate}
		\item $X$ is $\phi$\textit{-}prime.
		
		\item For $\forall$ right ideal $I$ in $R$ and $Y\in S(M),$
		\[
		YI\subseteq X\text{ and }YI\nsubseteq\phi(X)\text{ implies that }Y\subseteq
		X\text{ or }I\subseteq(X:_{R}M).
		\]

		\item For $\forall$ left ideal $I$ of $R$ and $Y\in S(M),$
		\[
		YI\subseteq X\text{ and }YI\nsubseteq\phi(X)\text{ implies that }Y\subseteq
		X\text{ or }I\subseteq(X:_{R}M).
		\]

		\item For $\forall a\in R$ and $Y\in S(M),$
		\[
		Y(RaR)\subseteq X\text{ and }Y(RaR)\nsubseteq\phi(X)\text{ implies that
		}Y\subseteq X\text{ or }a\in(X:_{R}M).
		\]

		\item For $\forall a\in R$ and $Y\in S(M),$
		\[
		Y(aR)\subseteq X\text{ and }Y(aR)\nsubseteq\phi(X)\text{ implies that
		}Y\subseteq X\text{ or }a\in(X:_{R}M).
		\]

		\item For $\forall a\in R$ and $Y\in S(M),$
		\[
		Y(Ra)\subseteq X\text{ and }Y(Ra)\nsubseteq\phi(X)\text{ implies that
		}Y\subseteq X\text{ or }a\in(X:_{R}M).
		\]
		
	\end{enumerate}
\end{theorem}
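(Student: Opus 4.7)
The plan is to arrange the six conditions into a web of implications centered on (1), exploiting trivial ideal-class containments wherever possible and reserving the real work for a handful of steps. The ``free'' implications come from weakening the hypothesis on the ideal: because every two-sided ideal is in particular both a right and a left ideal, (2) $\Rightarrow$ (1) and (3) $\Rightarrow$ (1) are immediate; because $RaR$ is a two-sided ideal and $RaR \subseteq (X:_{R}M)$ iff $a \in (X:_{R}M)$, (1) $\Rightarrow$ (4) is immediate; and because $aR$ is a right ideal and $Ra$ is a left ideal, (2) $\Rightarrow$ (5) and (3) $\Rightarrow$ (6) are immediate. So the substantive content lies in the reverse directions.

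For (1) $\Rightarrow$ (2), I would feed Theorem \ref{the main} with a well-chosen element. Assume (1), let $I$ be a right ideal with $YI \subseteq X$, $YI \nsubseteq \phi(X)$, and suppose $Y \nsubseteq X$. For any $m \in Y - X$ and any $a \in I$, the containment $mR \subseteq Y$ together with $I$ being a right ideal yields $(mR)a \subseteq YI \subseteq X$, so $I \subseteq (X:_{R}mR)$. Theorem \ref{the main} then forces, for each such $m$, either $(X:_{R}mR) = (X:_{R}M)$, giving the desired $I \subseteq (X:_{R}M)$, or else $(X:_{R}mR) = (\phi(X):_{R}mR)$, i.e., $(mR)I \subseteq \phi(X)$. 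If the second alternative held for every $m \in Y - X$, then fixing some $m^{\ast} \in Y - X$ and taking an arbitrary $m \in Y \cap X$ we have $m + m^{\ast} \in Y - X$, so $(m + m^{\ast})I$ and $m^{\ast}I$ both lie in $\phi(X)$, and subtraction gives $mI \subseteq \phi(X)$; this forces $YI \subseteq \phi(X)$, contradicting hypothesis. The proof of (1) $\Rightarrow$ (3) is the same, using that when $I$ is a left ideal $(mR)I \subseteq m(RI) \subseteq mI \subseteq YI \subseteq X$, so Theorem \ref{the main} applies identically.

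The three ``principal-to-arbitrary'' implications (4) $\Rightarrow$ (1), (5) $\Rightarrow$ (2), (6) $\Rightarrow$ (3) all follow one standard pattern. For (5) $\Rightarrow$ (2): assume (5) with $Y \nsubseteq X$, $I$ a right ideal, $YI \subseteq X$ and $YI \nsubseteq \phi(X)$. Since $YI = \sum_{i \in I} Y(iR)$ there is some $b \in I$ with $Y(bR) \nsubseteq \phi(X)$, and (5) gives $b \in (X:_{R}M)$. For any $a \in I$: if $Y(aR) \nsubseteq \phi(X)$, (5) gives $a \in (X:_{R}M)$ directly; otherwise $Y((a+b)R) \nsubseteq \phi(X)$ (for if $y(a+b)r \in \phi(X)$ for all $y \in Y$, $r \in R$, subtracting $yar \in \phi(X)$ would yield $ybr \in \phi(X)$, contradicting the choice of $b$), so (5) gives $a + b \in (X:_{R}M)$ and hence $a = (a+b) - b \in (X:_{R}M)$. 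The proofs of (6) $\Rightarrow$ (3) and (4) $\Rightarrow$ (1) are verbatim with $Ra$, resp.\ $RaR$, in place of $aR$, using $R(a+b)R \subseteq RaR + RbR$ in the last case. The main obstacle I foresee is purely bookkeeping: justifying the decomposition $YI = \sum_{i \in I} Y(iR)$ (and its left/two-sided analogues) together with the ``add a fixed $m^{\ast}$'' manipulation that upgrades a conclusion obtained on $Y - X$ to one on all of $Y$. Both are routine once the right-module and one-sided ideal conventions are fixed.
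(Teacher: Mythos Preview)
Your argument is correct, but the paper's proof is organized rather differently and is considerably shorter. Instead of a web of implications, the paper proves the single cycle $(1)\Rightarrow(2)\Rightarrow(3)\Rightarrow(4)\Rightarrow(5)\Rightarrow(6)\Rightarrow(1)$. The step $(1)\Rightarrow(2)$ is done in one line by passing from the right ideal $I$ to the two-sided ideal $\langle I\rangle$ it generates: since $YR=Y$, one has $Y\langle I\rangle=YI$, so the $\phi$-prime definition applies directly to $\langle I\rangle$ and yields the conclusion; $(2)\Rightarrow(3)$ is the symmetric observation for left ideals. The passages $(3)\Rightarrow(4)\Rightarrow(5)\Rightarrow(6)$ are then just repeated uses of the identities $YR=Y$, $Y(Ra)=Ya$, $Y(aR)=(Ya)R$, with no case analysis at all. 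Theorem~\ref{the main} is invoked exactly once, for $(6)\Rightarrow(1)$, and there only its easy direction $(2)\Rightarrow(1)$ is needed.

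By contrast, you invoke Theorem~\ref{the main} (its harder direction) twice, for $(1)\Rightarrow(2)$ and $(1)\Rightarrow(3)$, and then separately prove three ``principal-to-arbitrary'' lifts $(4)\Rightarrow(1)$, $(5)\Rightarrow(2)$, $(6)\Rightarrow(3)$ by the same pivot-element trick that already drives the proof of Theorem~\ref{the main} itself. This works, but it duplicates effort: your $(1)\Rightarrow(2)$ essentially re-proves $(3)\Rightarrow(1)$ of Theorem~\ref{the main}, and your lifts re-run that argument yet again. The paper's key insight---that for a right $R$-module the identity $Y\langle I\rangle=YI$ makes one-sided ideals cost nothing over two-sided ones---lets it avoid all of that. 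One small remark: in your $(1)\Rightarrow(2)$ the phrase ``together with $I$ being a right ideal'' is not actually used; the inclusion $(mR)a\subseteq YI$ follows just from $mR\subseteq Y$ and $a\in I$.
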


\begin{proof}
	$(1)\Rightarrow(2):$ Suppose that $X$ is $\phi$-prime. Choose a right
	ideal\ $I$ and $Y\in S(M)$ with $YI\subseteq X$, $YI\nsubseteq\phi(X).$ Let
	$<I>:=\{\sum r_{i}a_{i}s_{i}:r_{i},s_{i}\in R$ and $a_{i}\in I\}$ be the ideal
	generated by $I.$ Then as $I$ is a right ideal, one easily has that
	$Y<I>\subseteq YI\subseteq X.$ Moreover, $Y<I>\nsubseteq\phi(X).$ Indeed, if
	$Y<I>\subseteq\phi(X),$ then $YI\subseteq Y<I>\subseteq\phi(X),$ a
	contradiction. Thus, since $X$ is $\phi$-prime, $Y<I>\subseteq X$ and
	$Y<I>\nsubseteq\phi(X)$, we have $Y\subseteq X$ or $<I>\subseteq(X:_{R}M),$ so
	$I\subseteq(X:_{R}M).$
	
	$(2)\Rightarrow(3):$ Choose a left ideal\ $I$ and $Y\in S(M)$ with
	$YI\subseteq X$, $YI\nsubseteq\phi(X).$ Let consider again the ideal $<I>$ of
	$R.$ Then since $YI\subseteq X$ and $I$ is a left ideal, one can see that
	$Y<I>\subseteq X.$ Moreover, let us prove $Y<I>\nsubseteq\phi(X).$ Asumme that
	$Y<I>\subseteq\phi(X),$ then $YI\subseteq Y<I>\subseteq\phi(X),$ a
	contradiction. Thus, since $<I>$ is an ideal (so right ideal) by (2), we
	obtain $Y\subseteq X$ or $<I>\subseteq(X:_{R}M),$ so $I\subseteq(X:_{R}M).$
	
	$(3)\Rightarrow(4):$ Let $a\in R$ and $Y$ be a submodule of $M$ such that
	$Y(RaR)\subseteq X$ and $Y(RaR)\nsubseteq\phi(X).$ Since $Y=YR,$
	$Y(RaR)=YR(aR)=Y(Ra)\subseteq X$ and $Y(Ra)\nsubseteq\phi(X)$. Since $Ra$ is a
	left ideal, by (3), one can see $Y\subseteq X$ or $Ra\subseteq(X:_{R}M)$. Thus
	$Y\subseteq X$ or $a\in(X:_{R}M)$.
	
	$(4)\Rightarrow(5):$ Assume $a\in R$ and $Y\in S(M)$ with $Y(aR)\subseteq X$
	and $Y(aR)\nsubseteq\phi(X).$ Then we see $Y(aR)=YR(aR)\subseteq X$ and
	$YR(aR)\nsubseteq\phi(X)$. By (4), one obtains $Y\subseteq X$ or $a\in
	(X:_{R}M)$.
	
	$(5)\Rightarrow(6):$ Let $a\in R$ and $Y\in S(M)$ with $Y(Ra)\subseteq X$,
	$Y(Ra)\nsubseteq\phi(X).$ Thus $Ya\subseteq X$ and $Ya\nsubseteq\phi(X).$ Then
	we see $Y(aR)\subseteq X$ and $Y(aR)\nsubseteq\phi(X).$ Thus by (5),
	$Y\subseteq X$ or $a\in(X:_{R}M)$.
	
	$(6)\Rightarrow(1):$ Suppose that (6) satisfies. By the help of
	$(1)\Leftrightarrow(2)$ in Theorem \ref{the main}, let us prove that for all
	$m\in M-X,$ one has $(X:_{R}mR)=(X:_{R}M)\cup(\phi(X):_{R}mR).$ Let
	$a\in(X:_{R}mR).$ Then we see $mRa\subseteq X.$ If $mRa\subseteq\phi(X),$ one
	gets $a\in(\phi(X):_{R}mR).$ If $mRa\nsubseteq\phi(X),$ this implies that
	$(mR)(Ra)\nsubseteq\phi(X).$ Thus we have $mRa=(mR)(Ra)\subseteq X$ and
	$(mR)(Ra)\nsubseteq\phi(X).$ Then by (6), $mR\subseteq X$ or $a\in(X:_{R}M)$.
	The first option gives us a contradiction with $m\in M-X.$ Then $a\in
	(X:_{R}M).$ Thus $(X:_{R}mR)\subseteq(X:_{R}M)\cup(\phi(X):_{R}mR).$ Since the
	other containment always satisfies, we have $(X:_{R}mR)=(X:_{R}M)\cup
	(\phi(X):_{R}mR).$ Therefore, $X$ is a $\phi$\textit{-}prime submodule of $M.$
\end{proof}

\begin{theorem}
	\label{the ilk} If $X$ is a $\phi$\textit{-}prime submodule such that
	$X(X:_{R}M)\nsubseteq\phi(X),$ then $X$ is prime.
\end{theorem}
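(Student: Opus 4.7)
The plan is to verify the defining property of primeness directly: given $Y\in S(M)$ and an ideal $I$ of $R$ with $YI\subseteq X$, I want to conclude $Y\subseteq X$ or $I\subseteq(X:_{R}M)$. The easy case is $YI\nsubseteq\phi(X)$, where the $\phi$-primeness of $X$ yields the conclusion immediately. So the real work is in the case $YI\subseteq\phi(X)$, and I would argue there by contradiction, assuming simultaneously that $Y\nsubseteq X$ and $I\nsubseteq(X:_{R}M)$.

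The key trick is the standard absorption move: replace $Y$ by $Y+X$ and $I$ by $I+(X:_{R}M)$, then study the enlarged product. Expanding gives
\[
(Y+X)\bigl(I+(X:_{R}M)\bigr) \;=\; YI \;+\; Y(X:_{R}M) \;+\; XI \;+\; X(X:_{R}M).
\]
Each summand lies in $X$: the first by assumption, the second because $Y\subseteq M$ and $M(X:_{R}M)\subseteq X$, and the last two because $X$ is a submodule absorbing multiplication by any ideal. Hence the whole product is contained in $X$.

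Now I would apply the $\phi$-primeness of $X$ to this enlarged product. If $(Y+X)(I+(X:_{R}M))\nsubseteq\phi(X)$, the definition forces $Y+X\subseteq X$ or $I+(X:_{R}M)\subseteq(X:_{R}M)$, i.e.\ $Y\subseteq X$ or $I\subseteq(X:_{R}M)$, contradicting the contradiction-hypothesis. Therefore $(Y+X)(I+(X:_{R}M))\subseteq\phi(X)$, and in particular the summand $X(X:_{R}M)$ lies in $\phi(X)$. This contradicts the standing hypothesis $X(X:_{R}M)\nsubseteq\phi(X)$, closing the argument.

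The only real subtlety is recognizing the right enlargement $(Y+X,\,I+(X:_{R}M))$ that produces an auxiliary product simultaneously contained in $X$ and containing $X(X:_{R}M)$ as a summand; once that is set up, the hypothesis $X(X:_{R}M)\nsubseteq\phi(X)$ delivers the contradiction automatically. No multiplicative structure beyond what is already encoded in the definitions of submodule and ideal is needed, so the proof should go through verbatim in the non-commutative setting.
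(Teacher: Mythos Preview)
Your proof is correct and rests on the same core idea as the paper's---enlarge $Y$ toward $X$ and $I$ toward $(X:_{R}M)$ so that the resulting product lies in $X$ yet contains $X(X:_{R}M)$, and then invoke $\phi$-primeness---but your execution is noticeably more economical. The paper carries out the enlargement in three separate stages: first it argues that one may assume $XI\subseteq\phi(X)$ (otherwise enlarge $Y$ by a single $mR$ with $mI\nsubseteq\phi(X)$), then that one may assume $Y(X:_{R}M)\subseteq\phi(X)$ (otherwise enlarge $I$ by a single $RaR$), and only after these reductions does it pick specific $x\in X$, $b\in(X:_{R}M)$ with $xb\notin\phi(X)$ to form the final product $(Y+xR)(I+RbR)$. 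Your one-shot enlargement to $(Y+X)\bigl(I+(X:_{R}M)\bigr)$ collapses all three steps, and the crucial containment $X(X:_{R}M)\subseteq(Y+X)\bigl(I+(X:_{R}M)\bigr)$ is then immediate rather than something to be arranged by hand. As a minor remark, your Case~2 argument never actually uses the assumption $YI\subseteq\phi(X)$, so the opening case split could be dropped entirely.
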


\begin{proof}
	Assume that $I$\ is an ideal of $R$ and $Y$ is a submodule of $M$ such that
	$YI\subseteq X.$ Then we have 2 cases:
	
	Case 1: $YI\nsubseteq\phi(X).$ As $X$ is $\phi$\textit{-}prime, we get
	$Y\subseteq X$ or $I\subseteq(X:_{R}M).$ So, it is done.
	
	Case 2: $YI\subseteq\phi(X).$ In this case, we may assume $XI\subseteq
	\phi(X)\cdot\cdot\cdot\cdot\cdot\cdot(1).$ Indeed, if $XI\nsubseteq\phi(X),$
	then there is an $m\in X$ such that $mI\nsubseteq\phi(X).$ Then we obtain
	$(Y+mR)I\subseteq X-\phi(X).$ As $X$ is $\phi$-prime, $Y+mR\subseteq X$ or
	$I\subseteq(X:_{R}M).$ So, $Y\subseteq X$ or $I\subseteq(X:_{R}M).$ Moreover,
	we may suppose $Y(X:_{R}M)$ $\subseteq\phi(X)\cdot\cdot\cdot\cdot\cdot
	\cdot(2).$ Indeed, if $Y(X:_{R}M)$ $\nsubseteq\phi(X),$ there exists an
	$a\in(X:_{R}M)$ with $Ya\nsubseteq\phi(X).$ Then we have $Y(I+RaR)\subseteq X$
	and $Y(I+RaR)\nsubseteq\phi(X).$ Since $X$ is $\phi$-prime, $Y\subseteq X$ or
	$I+RaR\subseteq(X:_{R}M).$ Therefore, $Y\subseteq X$ or $I\subseteq(X:_{R}M).$
	
	As $X(X:_{R}M)\nsubseteq\phi(X),$ one can see that there are $b\in(X:_{R}M)$ and
	$x\in X$ such that $xb\notin\phi(X).$ Then by (1) and (2), we obtain
	$(Y+xR)(I+RbR)\subseteq X$ and $(Y+xR)(I+RbR)\nsubseteq\phi(X).$ By the help
	of the hypothesis, $Y+xR\subseteq X$ or $I+RbR\subseteq(X:_{R}M).$ Then one
	obtains $Y\subseteq X$ or $I\subseteq(X:_{R}M).$
\end{proof}

\begin{corollary}
	If $X$ is a weakly prime submodule with $X(X:_{R}M)\neq0_{M},$ then $X$ is prime.
\end{corollary}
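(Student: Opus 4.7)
The plan is to derive this corollary as a direct specialization of Theorem \ref{the ilk} to the case $\phi = \phi_{0}$, i.e., $\phi(X) = \{0_{M}\}$. By the listed correspondences in the definition, a submodule $X$ is weakly prime precisely when it is $\phi_{0}$-prime.

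First, I would translate the hypothesis. The assumption $X(X:_{R}M) \neq 0_{M}$ says that $X(X:_{R}M)$ is not contained in $\{0_{M}\}$, which is exactly $X(X:_{R}M) \nsubseteq \phi_{0}(X)$.

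Next, I would invoke Theorem \ref{the ilk} with $\phi := \phi_{0}$. Since $X$ is $\phi_{0}$-prime (i.e., weakly prime) and $X(X:_{R}M) \nsubseteq \phi_{0}(X)$, the theorem yields that $X$ is prime.

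There is essentially no obstacle here; the only thing to verify is that the dictionary between weakly prime submodules (as defined in the excerpt via Beiranvand--Beyranvand) and $\phi_{0}$-prime submodules (in the author's new sense) is consistent. Both require, for $YI \subseteq X$ with a nontriviality condition on $YI$ relative to $\{0_{M}\}$, that $Y \subseteq X$ or $I \subseteq (X:_{R}M)$, so the identification is immediate and the corollary follows in one line from Theorem \ref{the ilk}.
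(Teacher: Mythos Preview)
Your proposal is correct and matches the paper's own proof exactly: the paper simply writes ``In Theorem \ref{the ilk}, set $\phi=\phi_{0}$,'' which is precisely the specialization you carry out. Your additional remarks merely unpack why this specialization is legitimate, so there is nothing to add or correct.
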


\begin{proof}
	In Theorem \ref{the ilk}, set $\phi=\phi_{0}.$
\end{proof}

\begin{corollary}
	If $X$ is a $\phi$\textit{-}prime submodule such that $\phi(X)\subseteq
	X(X:_{R}M)^{2},$ then $X$ is $\phi_{\omega}$-prime.
\end{corollary}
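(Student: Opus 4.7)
The plan is to invoke Theorem \ref{the ilk} in order to dichotomize: since $X$ is $\phi$-prime, either $X$ is already prime, or $X(X:_{R}M)\subseteq\phi(X)$. In the first case there is nothing further to check, because whenever $YI\subseteq X$ one immediately obtains $Y\subseteq X$ or $I\subseteq(X:_{R}M)$, with no side condition on $YI$ and $\phi_{\omega}(X)$ needed, so $X$ is automatically $\phi_{\omega}$-prime.

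The remaining case is $X(X:_{R}M)\subseteq\phi(X)$. Here I would chain this with the given hypothesis to get
\[
X(X:_{R}M)\subseteq\phi(X)\subseteq X(X:_{R}M)^{2}\subseteq X(X:_{R}M),
\]
which collapses to the equalities $\phi(X)=X(X:_{R}M)=X(X:_{R}M)^{2}$. A short induction on $n$, carried out by multiplying the identity $X(X:_{R}M)=X(X:_{R}M)^{2}$ on the right by successive powers of $(X:_{R}M)$, then yields $X(X:_{R}M)^{n}=X(X:_{R}M)$ for every $n\geq 1$. Consequently
\[
\phi_{\omega}(X)=\bigcap_{n=1}^{\infty}X(X:_{R}M)^{n}=X(X:_{R}M)=\phi(X),
\]
so the defining condition of $\phi$-prime becomes verbatim the defining condition of $\phi_{\omega}$-prime, and $X$ is $\phi_{\omega}$-prime as required.

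I do not foresee any serious obstacle. The essential conceptual step is simply recognizing that Theorem \ref{the ilk} provides exactly the dichotomy needed: in the non-prime branch, the sandwich $X(X:_{R}M)\subseteq\phi(X)\subseteq X(X:_{R}M)^{2}$ forces the ideal $(X:_{R}M)$ to act ``idempotently'' on $X$, which collapses the entire descending chain $\{X(X:_{R}M)^{n}\}_{n\geq 1}$ and hence its intersection $\phi_{\omega}(X)$ down to $\phi(X)$ itself. Everything after that is a routine comparison of the two definitions.
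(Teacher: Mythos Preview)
Your argument is correct and follows essentially the same route as the paper's own proof: invoke Theorem \ref{the ilk} to split into the prime case (done immediately) and the case $X(X:_{R}M)\subseteq\phi(X)$, then use the sandwich $X(X:_{R}M)\subseteq\phi(X)\subseteq X(X:_{R}M)^{2}\subseteq X(X:_{R}M)$ to collapse all the $X(X:_{R}M)^{n}$ and hence identify $\phi_{\omega}(X)$ with $\phi(X)$. Your write-up is in fact slightly more explicit than the paper's about the inductive step showing $X(X:_{R}M)^{n}=X(X:_{R}M)$ for all $n\geq 1$.
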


\begin{proof}
	Assume that $YI\subseteq X$ and $YI\nsubseteq\cap_{i=1}^{\infty}X(X:_{R}%
	M)^{i},$ for some $Y\in S(M)$ and ideal $I$ of $R$. If $X$ is prime, we are
	done. So, suppose $X$ is not prime. Then Theorem \ref{the ilk} implies
	$X(X:_{R}M)\subseteq\phi(X)\subseteq X(X:_{R}M)^{2}$ $\subseteq X(X:_{R}M),$
	i.e., $X(X:_{R}M)=\phi(X)=X(X:_{R}M)^{2}.$ Thus, we obtain $\phi(X)=\cap
	_{i=1}^{\infty}X(X:_{R}M)^{i},$ for every $i\geq1.$ As $X$ is $\phi$
	\textit{-}prime, $Y\subseteq X$ or $I\subseteq(X:_{R}M)$. Consequently, we
	obtain $X$ is $\phi_{\omega}$-prime.
\end{proof}

\bigskip

Note that a submodule $X$ of $M$ is called \textit{radical} if $\sqrt
{(X:_{R}M)}=(X:_{R}M).$

\begin{corollary}
	Let $X$ be a $\phi$\textit{-}prime submodule of $M$. Then
	\begin{enumerate}
		\item Either $(X:_{R}M)\subseteq\sqrt{(\phi(X):_{R}M)}$ or $\sqrt
		{(\phi(X):_{R}M)}\subseteq(X:_{R}M).$
		
		\item If $(X:_{R}M)\subsetneq\sqrt{(\phi(X):_{R}M)},$ $X$ is not prime$.$
		
		\item If $\sqrt{(\phi(X):_{R}M)}\subsetneq(X:_{R}M),$ $X$ is prime$.$
		
		\item If $\phi(X)$ is a radical submodule, then either $(X:_{R}M)=(\phi
		(X):_{R}M)$ or $X$ is prime.
	\end{enumerate}
\end{corollary}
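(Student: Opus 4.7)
My plan is to reduce everything to the dichotomy supplied by Theorem~\ref{the ilk}: either $X(X:_R M)\subseteq\phi(X)$, or else $X$ is already prime. The only auxiliary facts I will need are (i) $\sqrt{I^n}=\sqrt{I}$ for any ideal $I$ of $R$, which is immediate from the prime-intersection description of the radical recalled in Section~\ref{Sec:1} (a prime ideal contains $I^n$ iff it contains $I$), and (ii) that $(X:_R M)$ is a prime ideal of $R$ whenever $X$ is a prime submodule of $M$ (one verifies this by applying the Dauns primeness of $X$ to $Y=MI_1$ when $I_1I_2\subseteq(X:_R M)$).

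To prove (1), I would first treat the case $X(X:_R M)\subseteq\phi(X)$. Right-multiplying the tautology $M(X:_R M)\subseteq X$ by $(X:_R M)$ gives $M(X:_R M)^2\subseteq X(X:_R M)\subseteq\phi(X)$, hence $(X:_R M)^2\subseteq(\phi(X):_R M)$. Passing to radicals and using $\sqrt{J^2}=\sqrt{J}$ produces the chain
\[
(X:_R M)\ \subseteq\ \sqrt{(X:_R M)}\ =\ \sqrt{(X:_R M)^2}\ \subseteq\ \sqrt{(\phi(X):_R M)}.
\]
In the complementary case $X(X:_R M)\nsubseteq\phi(X)$, Theorem~\ref{the ilk} forces $X$ to be prime, so by (ii) the ideal $(X:_R M)$ is prime and in particular radical; combined with $(\phi(X):_R M)\subseteq(X:_R M)$ (which follows from the standing assumption $\phi(X)\subseteq X$), taking radicals yields $\sqrt{(\phi(X):_R M)}\subseteq(X:_R M)$. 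This establishes~(1).

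Items (2)--(4) then fall out by contraposition or specialisation. For~(2), if $X$ were prime then $(X:_R M)$ would be a radical ideal containing $\sqrt{(\phi(X):_R M)}$, contradicting the assumed strict inclusion. For~(3), were $X$ not prime the first case of the dichotomy would give $(X:_R M)\subseteq\sqrt{(\phi(X):_R M)}\subsetneq(X:_R M)$, which is absurd. For~(4), if $X$ is not prime the first case together with the radical hypothesis $\sqrt{(\phi(X):_R M)}=(\phi(X):_R M)$ yields $(X:_R M)\subseteq(\phi(X):_R M)$, and the reverse inclusion is automatic from $\phi(X)\subseteq X$. The only genuinely non-routine ingredient here is the identity $\sqrt{J^2}=\sqrt{J}$ in the non-commutative setting, which depends on reading the radical through primes; beyond that the whole proof is pure containment-chasing around the two alternatives of Theorem~\ref{the ilk}, so I do not anticipate any serious obstacle.
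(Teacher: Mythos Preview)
Your proof is correct and follows essentially the same route as the paper: both arguments pivot on the dichotomy furnished by Theorem~\ref{the ilk} (equivalently, ``$X$ prime'' versus ``$X(X:_R M)\subseteq\phi(X)$''), derive $(X:_R M)^2\subseteq(\phi(X):_R M)$ in the non-prime case via $M(X:_R M)\subseteq X$, and then use $\sqrt{I^2}=\sqrt{I}$ together with the primeness of $(X:_R M)$ in the prime case; parts (2)--(4) are handled by the same contrapositions in both.
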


\begin{proof}
	Suppose $X$ is $\phi$\textit{-}prime.
	\begin{enumerate}
		\item Assume that $X$ is prime. Then $(X:_{R}M)$ is a prime ideal of $R$, see
		\cite{D}. As $\phi(X)\subseteq X,$ we see $(\phi(X):_{R}M)\subseteq(X:_{R}M),$
		so $\sqrt{(\phi(X):_{R}M)}\subseteq\sqrt{(X:_{R}M)}=(X:_{R}M).$ Now assume
		that $X$ is not prime. By Theorem \ref{the ilk}, one see $X(X:_{R}%
		M)\subseteq\phi(X).$ This implies that $\sqrt{(X:_{R}M)^{2}}\subseteq
		\sqrt{(X(X:_{R}M):_{R}M)}\subseteq\sqrt{(\phi(X):_{R}M)}.$ Hence
		$(X:_{R}M)\subseteq\sqrt{(X:_{R}M)}=\sqrt{(X:_{R}M)^{2}}\subseteq\sqrt
		{(\phi(X):_{R}M)}.$
		
		\item Suppose $(X:_{R}M)\subsetneq\sqrt{(\phi(X):_{R}M)}.$ If $X$ is prime,
		$\sqrt{(\phi(X):_{R}M)}\subseteq\sqrt{(X:_{R}M)}=(X:_{R}M),$ i.e., a
		contradiction. So, $X$ is not prime.
		
		\item Let $\sqrt{(\phi(X):_{R}M)}\subsetneq(X:_{R}M).$ If $X$ is not prime, by
		the help of Theorem \ref{the ilk}, we get $X(X:_{R}M)\subseteq\phi(X).$ Then
		one see $\sqrt{(X:_{R}M)^{2}}\subseteq\sqrt{(X(X:_{R}M):_{R}M)}\subseteq
		\sqrt{(\phi(X):_{R}M)}.$ Hence, since $\sqrt{(X:_{R}M)^{2}}=	\sqrt{(X:_{R}M)},$ $(X:_{R}M)\subseteq\sqrt{(\phi(X):_{R}M)},$
		i.e., a contradiction.
		
		\item  Let $\phi(X)$ be a radical submodule. Suppose that $X$ is not prime. By
		the argument in the proof of (1), $(X:_{R}M)\subseteq\sqrt{(\phi(X):_{R}M)}.$
		Then since $\phi(X)$ is a radical submodule, we see that  $(X:_{R}%
		M)\subseteq\sqrt{(\phi(X):_{R}M)}=(\phi(X):_{R}M).$ As the other containment
		is always hold, $(X:_{R}M)=(\phi(X):_{R}M).$
	\end{enumerate}
\end{proof}

\begin{remark}
	Assume that $X\in S(M).$
	
	\begin{enumerate}
		\item If $X$ is $\phi$-prime but not prime such that $\phi(X)\subseteq
		X(X:_{R}M),$ then $\phi(X)=X(X:_{R}M).$ In particular, if $X$ is\ not prime
		and $X$ is weakly prime, then $X(X:_{R}M)=0_{M}.$
		
		\item If $X$ is $\phi$-prime but not prime such that $\phi(X)\subseteq
		X(X:_{R}M)^{2},$ then $\phi(X)=X(X:_{R}M)^{2}.$ In particular, if $X$ is\ not
		prime and $X$ is $\phi_{2}$-prime, then $X(X:_{R}M)=X(X:_{R}M)^{2}.$
	\end{enumerate}
\end{remark}

\bigskip

Now, for $Y\in S(M),$ let us define $\phi_{Y}:S(M/Y)\rightarrow S(M/Y)\cup
\{\emptyset\}$ by $\phi_{Y}(X/Y)=(\phi(X)+Y)/Y,$ for every $X\in S(M)$ with
$Y\subseteq X$ (and $\phi_{Y}(X/Y)=\emptyset$ if $\phi(X)=\emptyset$).

\bigskip

\begin{theorem}
	\label{the bolum} Let $X,Y\in S(M)$ be proper with $Y\subseteq X.$ Then we have
	
	\begin{enumerate}
		\item If $X$ is a $\phi$-prime submodule of $M,$ then $X/Y$ is a $\phi_{Y}
		$-prime submodule of $M/Y$.
		
		\item If $Y\subseteq\phi(X)$ and $X/Y$ is a $\phi_{Y}$-prime submodule of
		$M/Y,$ then $X$ is a $\phi$-prime submodule of $M$.
		
		\item If $\phi(X)\subseteq Y$ and $X$ is $\phi$-prime, then $X/Y$ is weakly prime.
		
		\item If $\phi(Y)\subseteq\phi(X),$ $Y$ is $\phi$-prime and $X/Y$ is weakly
		prime, then $X$ is $\phi$-prime.
	\end{enumerate}
\end{theorem}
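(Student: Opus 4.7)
The plan is to prove all four parts by exploiting the standard bijection between submodules of $M/Y$ and submodules of $M$ containing $Y$, together with the easily verified identity
\[
\bigl((M/Y):_{R}(M/Y)\bigr)\supseteq(X:_{R}M) \text{ and conversely } \bigl((X/Y):_{R}(M/Y)\bigr)=(X:_{R}M)
\]
whenever $Y\subseteq X$. I will repeatedly use the two translations: (a) for any $Y'\in S(M)$ with $Y\subseteq Y'$ and any ideal $I$, $(Y'/Y)I\subseteq X/Y$ is equivalent to $Y'I\subseteq X$; and (b) $I\subseteq((X/Y):_{R}(M/Y))$ is equivalent to $I\subseteq(X:_{R}M)$. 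Each part will then reduce to a calculation checking that the $\phi$-condition transports correctly.

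For (1), given $\overline{Y'}=Y'/Y$ and $I$ with $\overline{Y'}I\subseteq X/Y$ and $\overline{Y'}I\not\subseteq\phi_{Y}(X/Y)=(\phi(X)+Y)/Y$, I would pull back to get $Y'I\subseteq X$; moreover if $Y'I\subseteq\phi(X)$ then $Y'I+Y\subseteq\phi(X)+Y$, contradicting the non-containment in $M/Y$, so $Y'I\not\subseteq\phi(X)$. Applying that $X$ is $\phi$-prime and translating back finishes (1). For (2), given $Y'\in S(M)$ and $I$ with $Y'I\subseteq X$, $Y'I\not\subseteq\phi(X)$, I would push forward to $(Y'+Y)/Y$ in $M/Y$; containment $(Y'+Y)/Y\cdot I\subseteq X/Y$ is clear, and the non-containment in $\phi_{Y}(X/Y)=(\phi(X)+Y)/Y=\phi(X)/Y$ uses exactly $Y\subseteq\phi(X)$, because otherwise $Y'I\subseteq\phi(X)+Y=\phi(X)$. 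Then $\phi_{Y}$-primeness of $X/Y$ yields $Y'+Y\subseteq X$ or $I\subseteq(X/Y:_{R}M/Y)$, which translates to the desired conclusion.

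For (3), the hypothesis $\phi(X)\subseteq Y$ makes $\phi_{Y}(X/Y)=Y/Y=0_{M/Y}$, so ``$\phi_{Y}$-prime'' on $X/Y$ coincides with ``weakly prime''; given $0_{M/Y}\neq(Y'/Y)I\subseteq X/Y$, I would translate to $Y'I\subseteq X$ and note $Y'I\not\subseteq Y\supseteq\phi(X)$, then apply the $\phi$-primeness of $X$ and translate the two alternatives back. For (4), take $Y'\in S(M)$, $I$ with $Y'I\subseteq X$ and $Y'I\not\subseteq\phi(X)$ and split into two cases. If $Y'I\subseteq Y$, the inclusion $\phi(Y)\subseteq\phi(X)$ forces $Y'I\not\subseteq\phi(Y)$, so $Y$ being $\phi$-prime gives $Y'\subseteq Y\subseteq X$ or $I\subseteq(Y:_{R}M)\subseteq(X:_{R}M)$. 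If $Y'I\not\subseteq Y$, then $(Y'+Y)/Y\cdot I$ is a nonzero submodule of $X/Y$, so weak primeness of $X/Y$ and the translation dictionary finish the argument.

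The main obstacle will be (4), because its success hinges on the precise use of $\phi(Y)\subseteq\phi(X)$ to ensure the negation $Y'I\not\subseteq\phi(Y)$ survives the case $Y'I\subseteq Y$; without that hypothesis the inductive step would collapse. A secondary delicate point, in (2), is verifying that the $\phi$-non-containment does not disappear when we pass from $M$ to $M/Y$, which is exactly where the hypothesis $Y\subseteq\phi(X)$ is needed. Once these two bookkeeping points are handled correctly, the rest is a mechanical translation through the quotient.
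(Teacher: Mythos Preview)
Your proposal is correct and follows essentially the same approach as the paper: in each of the four parts you and the paper use the submodule correspondence for $M/Y$, translate the containment $YI\subseteq X$ and the residual identity $(X/Y:_{R}M/Y)=(X:_{R}M)$ back and forth, and isolate exactly where the side hypotheses $Y\subseteq\phi(X)$, $\phi(X)\subseteq Y$, and $\phi(Y)\subseteq\phi(X)$ are consumed. The case split in (4) and the reduction of (3) to $\phi_{Y}(X/Y)=0_{M/Y}$ match the paper verbatim; the only cosmetic difference is that the paper works directly rather than first recording your translation dictionary, and your opening displayed line $\bigl((M/Y):_{R}(M/Y)\bigr)\supseteq(X:_{R}M)$ is garbled (you mean only the second identity $(X/Y:_{R}M/Y)=(X:_{R}M)$), but this does not affect the argument.
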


\begin{proof}
	Let $X,Y\in S(M)$ be proper with $Y\subseteq X.$
	
	(1) : Assume $I\in S(R)$ and $Z/Y$ is a submodule of $M/Y$ with
	$(Z/Y)I\subseteq X/Y$ and $(Z/Y)I\nsubseteq$ $\phi_{Y}(X/Y).$ Then clearly,
	$(Z/Y)I=ZI+Y/Y$ and $ZI\subseteq ZI+Y\subseteq X.$ Moreover $ZI\nsubseteq
	\phi(X).$ Indeed, if $ZI\subseteq\phi(X),$ then one can see $(ZI+Y)/Y\subseteq
	(\phi(X)+Y)/Y=\phi_{Y}(X/Y),$ so $(Z/Y)I\subseteq\phi_{Y}(X/Y)$, i.e., a
	contradiction. Since $X$ is $\phi$-prime, we see $I\subseteq(X:_{R}M)$ or
	$Z\subseteq X.$ Then one obtains $I\subseteq(X:_{R}M)=(X/Y:_{R}M/Y)$ or
	$Z/Y\subseteq X/Y.$
	
	(2) : Suppose that $I$ is an ideal of $R$ and $Z$ is a submodule of $M$ such
	that $ZI\subseteq X$ and $ZI\nsubseteq\phi(X).$ Then $ZI+Y/Y=(Z/Y)I\subseteq
	X/Y.$ Moreover, $(Z/Y)I\nsubseteq\phi_{Y}(X/Y).$ Indeed, if $(Z/Y)I\subseteq
	\phi_{Y}(X/Y)=(\phi(X)+Y)/Y,$ as $Y\subseteq\phi(X)$ we have $ZI+Y/Y\subseteq
	\phi(X)/Y,$ i.e., $ZI\subseteq\phi(X),$ a contradiction. Since $X/Y$ is a
	$\phi_{Y}$-prime submodule of $M/Y,$ one can see $I\subseteq(X/Y:_{R}M/Y)$ or
	$Z/Y\subseteq X/Y.$ This implies that $I\subseteq(X:_{R}M)$ or $Z\subseteq X.$
	
	(3) : Assume that $I\in S(R)$ and $Z/Y$ is a submodule of $M/Y$ with
	$0_{M/Y}\neq(Z/Y)I\subseteq X/Y.$ Clearly, we have $Y\subset ZI\subseteq X.$
	Then since $\phi(X)\subseteq Y,$ we see $ZI\nsubseteq\phi(X).$ As $X$ is
	$\phi$-prime, $I\subseteq(X:_{R}M)$ or $Z\subseteq X.$ This implies
	$I\subseteq(X/Y:_{R}M/Y)$ or $Z/Y\subseteq X/Y.$
	
	(4) : Suppose that $\phi(Y)\subseteq\phi(X)$, $Y$ is $\phi$-prime and $X/Y$ is
	weakly prime. Choose $Z\in S(M)$ and an ideal $I$ of $R$ which $ZI\subseteq
	X$, $ZI\nsubseteq\phi(X).$ Then since $\phi(Y)\subseteq\phi(X)$ and
	$ZI\nsubseteq\phi(X),$ we have $ZI\nsubseteq\phi(Y).$ Then one can see 2 cases :
	
	Case 1 : $ZI\subseteq Y.$ As $Y$ is $\phi$-prime, $I\subseteq(Y:_{R}M)$ or
	$Z\subseteq Y.$ Since $Y\subseteq X,$ we have $I\subseteq(X:_{R}M)$ or
	$Z\subseteq X,$ so it is done.
	
	Case 2 : $ZI\nsubseteq Y.$ Then $0_{M/Y}\neq ZI+Y/Y=(Z/Y)I\subseteq X/Y.$
	Since $X/Y$ is weakly prime, $I\subseteq(X/Y:_{R}M/Y)$ or $Z/Y\subseteq X/Y.$
	Thus, we obtain $I\subseteq(X:_{R}M)$ or $Z\subseteq X.$
\end{proof}

\begin{corollary}
	For a proper $X\in S(M)$, $X$ is $\phi$-prime in $M$ $\Longleftrightarrow$
	$X/\phi(X)$ is weakly prime in $M/\phi(X).$
\end{corollary}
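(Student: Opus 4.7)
The plan is to derive the corollary as a direct specialization of Theorem \ref{the bolum} with the particular choice $Y=\phi(X)$, so no new computation should really be needed beyond a careful bookkeeping of what $\phi_{Y}$ becomes at this $Y$.

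For the forward direction, I would apply Theorem \ref{the bolum}(3). Since $\phi(X)\subseteq\phi(X)$ trivially, and since the submodule $Y=\phi(X)$ is proper in $M$ whenever $X$ is (because $\phi(X)\subseteq X$ and $X$ is proper; the degenerate case $\phi(X)=M$ would force $X=M$, which is excluded), part (3) immediately yields that $X/\phi(X)$ is weakly prime in $M/\phi(X)$.

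For the backward direction, I would invoke Theorem \ref{the bolum}(2) with $Y=\phi(X)$. The key observation is that for this specific $Y$,
\[
\phi_{Y}(X/Y) \;=\; \frac{\phi(X)+Y}{Y} \;=\; \frac{\phi(X)+\phi(X)}{\phi(X)} \;=\; \frac{\phi(X)}{\phi(X)} \;=\; \{0_{M/\phi(X)}\},
\]
so the map $\phi_{\phi(X)}$ on $S(M/\phi(X))$ sends $X/\phi(X)$ to the zero submodule. Hence a $\phi_{\phi(X)}$-prime submodule of $M/\phi(X)$ at $X/\phi(X)$ is precisely a weakly prime submodule in the sense of $\phi_{0}$. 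Thus the hypothesis that $X/\phi(X)$ is weakly prime in $M/\phi(X)$ is exactly the hypothesis of Theorem \ref{the bolum}(2) with $Y=\phi(X)$, which gives the conclusion that $X$ is $\phi$-prime in $M$.

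The main (and essentially only) subtle point is the identification $\phi_{\phi(X)}(X/\phi(X)) = \{0_{M/\phi(X)}\}$, which is what makes weakly prime (that is, $\phi_{0}$-prime) in the quotient match the abstract $\phi_{Y}$-prime hypothesis of part (2). Once this is noted, both implications are one-line applications of the already-proved theorem, and no further case analysis is required.
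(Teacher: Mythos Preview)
Your proposal is correct and follows exactly the paper's approach: the forward implication is Theorem~\ref{the bolum}(3) with $Y=\phi(X)$, and the backward implication is Theorem~\ref{the bolum}(2) with $Y=\phi(X)$. The paper's proof consists solely of these two citations, so your added verification that $\phi_{\phi(X)}(X/\phi(X))=\{0_{M/\phi(X)}\}$ (making $\phi_{\phi(X)}$-prime coincide with weakly prime for this submodule) is a welcome clarification of a step the paper leaves implicit.
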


\begin{proof}
	$\Longrightarrow:$ By (3) of Theorem \ref{the bolum}.
	
	$\Longleftarrow:$ By (2) of Theorem \ref{the bolum}.
\end{proof}

\bigskip

Note that we say $M$ is a torsion-free module if $(0_{M}:_{R}m)=0_{R},$ for
all $0_{M}\neq m\in M.$

\begin{theorem}
	\label{the Rm}Let $M$ be torsion-free and $0_{M}\neq m\in M$. Then $mR$ is
	prime $\Longleftrightarrow$ $mR$ is almost prime.
\end{theorem}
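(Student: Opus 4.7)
The forward implication requires no work and no use of the torsion-free hypothesis: every prime submodule trivially satisfies the defining condition of almost prime, since the hypothesis in the almost prime condition is strictly stronger than that of prime while the conclusion is the same. So the content of the theorem lies in the reverse direction, where I will assume $mR$ is almost prime and, given an ideal $I$ of $R$ and $Y\in S(M)$ with $YI\subseteq mR$, aim to conclude $Y\subseteq mR$ or $I\subseteq (mR:_R M)$.

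The plan is to split on whether $YI$ is contained in $mR(mR:_R M)$. If $YI\nsubseteq mR(mR:_R M)$, the almost prime hypothesis finishes things directly. If $YI\subseteq mR(mR:_R M)$, I further split on whether $mR\cdot I\subseteq mR(mR:_R M)$. In the first subcase, I pick $m'\in mR$ with $m'I\nsubseteq mR(mR:_R M)$, replace $Y$ by $Y+m'R$, observe that $(Y+m'R)I$ still sits in $mR$ but now escapes $mR(mR:_R M)$, and invoke almost prime on this enlarged submodule; since $m'R\subseteq mR$, the resulting dichotomy still gives $Y\subseteq mR$ or $I\subseteq (mR:_R M)$. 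This is essentially the reduction step used in the proof of Theorem \ref{the ilk}.

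The decisive case, and the only place where torsion-freeness is needed, is when $mR\cdot I\subseteq mR(mR:_R M)$. Here I exploit that $(mR:_R M)$ is a two-sided ideal of $R$, so $mR(mR:_R M)$ collapses to $m\bigl((mR:_R M)\bigr)$. For each $i\in I$, the containment $mi\in m(mR:_R M)$ produces some $a\in (mR:_R M)$ with $mi=ma$, whence $m(i-a)=0$; torsion-freeness together with $m\neq 0_M$ then forces $i=a\in (mR:_R M)$, giving $I\subseteq (mR:_R M)$ as required.

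The main obstacle is recognizing that Theorem \ref{the ilk} cannot be applied as a black box: it would require $X(X:_R M)\nsubseteq \phi(X)$, which for $\phi=\phi_2$ is never satisfied. One must instead mimic the structure of its proof up to the final step, and substitute the torsion-free argument above for the appeal to $X(X:_R M)\nsubseteq \phi(X)$ that occurs there. Once this substitution is in place, every branch of the case analysis closes, and almost prime promotes to prime for cyclic submodules $mR$ with $m\neq 0_M$ in a torsion-free module.
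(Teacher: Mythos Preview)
Your argument is correct and follows essentially the same path as the paper's proof: both split on whether the product lies in $mR(mR:_R M)$, enlarge by an element of $mR$ when it does, and invoke torsion-freeness in the terminal subcase to force membership in $(mR:_R M)$. The paper runs this as a contrapositive with specific witnesses $x\notin mR$, $a\notin(mR:_R M)$ and adds the generator $m$ itself to $x$, whereas you argue directly with general $Y,I$ and frame the reduction as an adaptation of the proof of Theorem~\ref{the ilk}; the substance and the key use of $(0_M:_R m)=0_R$ are identical.
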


\begin{proof}
	$\Longrightarrow:$ Obvious.
	
	$\Longleftarrow:$ Assume that $mR$ is not prime. Then there are $a\in R$,
	$x\in M$ with $a\notin(mR:_{R}M)$, $x\notin mR,$ also $xRa\subseteq mR.$ Then
	we have $(xR)(RaR)\subseteq mR$ and the following 2 cases:
	
	Case 1 : $(xR)(RaR)\nsubseteq mR(mR:_{R}M)=\phi_{2}(mR).$ Since $a\notin
	(mR:_{R}M)$, $x\notin mR$, one gets $(RaR)\nsubseteq(mR:_{R}M)$ and
	$(xR)\nsubseteq mR.$ Thus we obtain that $mR$ is not almost prime.
	
	Case 2 : $(xR)(RaR)\subseteq mR(mR:_{R}M)=\phi_{2}(mR).$ Then we have $xa\in$
	$mR(mR:_{R}M).$ Moreover, as $xRa\subseteq mR,$ we have $(x+m)a\in mR$ and $x+m\notin mR.$ Then $(xR+mR)(RaR)\subseteq mR.$ If
	$(xR+mR)(RaR)\nsubseteq mR(mR:_{R}M),$ as $a\notin(mR:_{R}M)$ and $x+m\notin
	mR,$ one can see $mR$ is not almost prime. If $(xR+mR)(RaR)\subseteq
	mR(mR:_{R}M),$ then $(x+m)a\in mR(mR:_{R}M).$ Then, by the assumption in Case
	2, we have $xa\in mR(mR:_{R}M),$ so, $ma\in mR(mR:_{R}M).$ Hence there exist
	an element $b\in(mR:_{R}M)$ and $r\in R$ such that $ma=(mr)b.$ This implies
	that $a-rb\in(0_{M}:_{R}m)=0_{R},$ i.e., $a=rb\in(mR:_{R}M).$ So, we obtain a
	contradiction with $a\notin(mR:_{R}M).$ Consequently, in every case $mR$ is
	not almost prime.
\end{proof}

\begin{theorem}
	\label{the aM}Let $0_{R}\neq a\in R$ such that $(0_{M}:_{M}a)\subseteq Ma$ and
	$a(Ma:_{R}M)=(Ma:_{R}M)a.$ Thus $Ma$ is prime $\Longleftrightarrow$ $Ma$ is
	almost prime.
\end{theorem}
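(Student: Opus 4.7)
The plan is to follow the template of Theorem \ref{the Rm}, adapting it to the submodule $Ma$. The forward direction is immediate, so all the work is in the converse. I would assume $Ma$ is almost prime but, for contradiction, not prime; by the prime characterization echoed in Theorem \ref{the def}, this supplies $x\in M$ and $b\in R$ with $xRb\subseteq Ma$, $x\notin Ma$, and $b\notin(Ma:_{R}M)$. From $(xR)(RbR)\subseteq Ma$ I split into two cases according to whether $(xR)(RbR)\subseteq Ma(Ma:_{R}M)=\phi_{2}(Ma)$ or not. The ``not'' case immediately contradicts almost primeness since $xR\nsubseteq Ma$ and $RbR\nsubseteq(Ma:_{R}M)$.

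In the case $(xR)(RbR)\subseteq Ma(Ma:_{R}M)$, the hypothesis $a(Ma:_{R}M)=(Ma:_{R}M)a$ lets me rewrite $Ma(Ma:_{R}M)=M(Ma:_{R}M)a$, so every element of this product ``ends in $a$''. Following the enlargement trick of Theorem \ref{the Rm}, I note that $a\in(Ma:_{R}M)$ trivially, and hence $aRbR\subseteq(Ma:_{R}M)$ (as $(Ma:_{R}M)$ is a two-sided ideal); consequently, for every $m_{0}\in M$ we have $x+m_{0}a\notin Ma$ and $(x+m_{0}a)R\cdot RbR\subseteq Ma$. A second split either contradicts almost primeness directly, or after subtracting the product for $x$ yields $m_{0}ab\in M(Ma:_{R}M)a$ for every $m_{0}\in M$. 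Writing $m_{0}ab=na$ with $n=m_{0}'a\in M(Ma:_{R}M)\subseteq Ma$, I arrive at the key relation
\[
(m_{0}a)b=(m_{0}'a)a\quad\text{for every }m_{0}\in M.
\]

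The remaining step, and the hardest part, is to convert this relation into the contradiction $b\in(Ma:_{R}M)$. Here is where the hypothesis $(0_{M}:_{M}a)\subseteq Ma$ enters: unlike in Theorem \ref{the Rm}, where torsion-freeness instantly cancels a single $a$ from the cyclic generator $m$, the present setting has no canonical generator and the conclusion $mb\in Ma$ has to be obtained uniformly in $m\in M$. The plan is to manipulate the displayed identity, using $a(Ma:_{R}M)=(Ma:_{R}M)a$ once more to shuffle $a$'s to the right, so that a suitable difference of the form $mb-m''a$ annihilates $a$; then $(0_{M}:_{M}a)\subseteq Ma$ places that difference inside $Ma$, giving $mb\in Ma$ for every $m\in M$ and hence $b\in(Ma:_{R}M)$, contradicting our choice of $b$.
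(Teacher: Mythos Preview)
Your setup through Case~1 is fine, and in Case~2 the enlargement $(x+m_{0}a)R\cdot RbR\subseteq Ma$ is correctly justified. The gap is in the final paragraph. After the second split you obtain, for every $m_{0}\in M$, the relation $m_{0}ab\in M(Ma:_{R}M)a$, i.e.\ $(Ma)b\subseteq M(Ma:_{R}M)a$. From this you propose to ``shuffle $a$'s'' until some $mb-m''a$ lies in $(0_{M}:_{M}a)$ and hence in $Ma$, forcing $b\in(Ma:_{R}M)$. But the displayed relation only controls products that already begin with an element of $Ma$; it says nothing about $mb$ for a general $m\in M$, and the commutation hypothesis $a(Ma:_{R}M)=(Ma:_{R}M)a$ cannot manufacture such information. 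There is no way to cancel an $a$ on the \emph{left} of $m_{0}ab$ using $(0_{M}:_{M}a)$, which only cancels $a$ acting on the \emph{right}. So the step ``$mb-m''a\in(0_{M}:_{M}a)$'' is unsupported, and with it the conclusion $b\in(Ma:_{R}M)$.

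The enlargement trick you borrowed from Theorem~\ref{the Rm} is on the wrong side for these hypotheses. In Theorem~\ref{the Rm} one enlarges the module factor and the torsion-free hypothesis cancels on the ring side; here the hypotheses $(0_{M}:_{M}a)\subseteq Ma$ and $a(Ma:_{R}M)=(Ma:_{R}M)a$ are tailored for the dual move: enlarge the \emph{ideal} factor, replacing $RbR$ by $RbR+RaR$ (note $(xR)(RaR)\subseteq Ma$ automatically). In the subcase $(xR)(RbR+RaR)\subseteq Ma(Ma:_{R}M)$ one gets $(xR)(RaR)\subseteq Ma(Ma:_{R}M)=M(Ma:_{R}M)a$, hence $xa=na$ with $n\in M(Ma:_{R}M)\subseteq Ma$, so $x-n\in(0_{M}:_{M}a)\subseteq Ma$ and therefore $x\in Ma$, contradicting your choice of $x$. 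That is the missing idea: aim for $x\in Ma$, not $b\in(Ma:_{R}M)$.
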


\begin{proof}
	$\Longrightarrow:$ It is obvious.
	
	$\Longleftarrow:$ Suppose that $Ma$ is almost prime. Let $b\in R$, $m\in
	M$ with $mRb\subseteq Ma.$ We prove that $m\in Ma$ or $b\in(Ma:_{R}M).$ Then
	one can see clearly, $(mR)(RbR)\subseteq Ma.$ Now, we get 2 cases:
	
	Case 1 : $(mR)(RbR)\nsubseteq Ma(Ma:_{R}M)=\phi_{2}(Ma).$ Since $Ma$ is
	almost prime, we have $mR\subseteq Ma$ or $RbR\subseteq(Ma:_{R}M).$ So,
	$m\in Ma$ or $b\in(Ma:_{R}M).$
	
	Case 2 : $(mR)(RbR)\subseteq Ma(Ma:_{R}M)=\phi_{2}(Ma).$ As $mb\in Ma,$ one
	gets $m(b+a)\in Ma.$ Then $(mR)(RbR+RaR)\subseteq Ma.$ If
	$(mR)(RbR+RaR)\nsubseteq Ma(Ma:_{R}M),$ as $Ma$ is almost prime,
	$mR\subseteq Ma$ or $RbR+RaR\subseteq(Ma:_{R}M).$ Thus, one can see
	$mR\subseteq Ma$ or $RbR\subseteq(Ma:_{R}M).$ Therefore, it is done. If
	$(mR)(RbR+RaR)\subseteq Ma(Ma:_{R}M),$ then $(mR)(RaR)\subseteq Ma(Ma:_{R}%
	M)=M(Ma:_{R}M)a.$ Thus $ma\in M(Ma:_{R}M)a.$ Then, one has $n\in M(Ma:_{R}M)$
	with $ma=na.$ Hence $m-n\in(0_{M}:_{M}a)\subseteq Ma.$ This implies $m\in
	M(Ma:_{R}M)+(0_{M}:_{M}a)\subseteq Ma.$
\end{proof}

\begin{corollary}
	Let $M$ be torsion-free and $a\in R$ such that $a(Ma:_{R}M)=(Ma:_{R}M)a.$ Thus
	$Ma$ is prime $\Longleftrightarrow$ $Ma$ is almost prime.
\end{corollary}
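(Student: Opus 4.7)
The plan is to derive this corollary directly from Theorem \ref{the aM}. Comparing the hypotheses, the corollary gives us torsion-freeness and the commutation condition $a(Ma:_{R}M)=(Ma:_{R}M)a$, but is missing the two extra requirements $0_{R}\neq a$ and $(0_{M}:_{M}a)\subseteq Ma$ that Theorem \ref{the aM} demands. So the task reduces to showing that, under the corollary's hypotheses, the extra requirements are automatic (or trivial).

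First, I would handle the generic case $a\neq 0_{R}$. Here I claim that torsion-freeness forces $(0_{M}:_{M}a)=0_{M}$, which obviously lies in $Ma$. Indeed, if $m\in(0_{M}:_{M}a)$ and $m\neq 0_{M}$, then $ma=0_{M}$ says $a\in(0_{M}:_{R}m)$; but torsion-freeness means $(0_{M}:_{R}m)=0_{R}$ for every nonzero $m$, forcing $a=0_{R}$, contrary to assumption. Thus $(0_{M}:_{M}a)\subseteq Ma$, both hypotheses of Theorem \ref{the aM} are met, and the equivalence $Ma$ prime $\Longleftrightarrow$ $Ma$ almost prime follows immediately.

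Second, I would dispose of the degenerate case $a=0_{R}$, where $Ma=0_{M}$. Using torsion-freeness one more time, the zero submodule is prime: if $mRb\subseteq 0_{M}$ with $m\neq 0_{M}$, then in particular $mb=0_{M}$, so $b\in(0_{M}:_{R}m)=0_{R}\subseteq(0_{M}:_{R}M)$. On the other hand, the zero submodule is vacuously almost prime because $\phi_{2}(0_{M})=0_{M}(0_{M}:_{R}M)=0_{M}$, so the hypothesis $YI\subseteq 0_{M}$ with $YI\nsubseteq\phi_{2}(0_{M})=0_{M}$ can never occur. So both conditions hold simultaneously and the equivalence is trivial.

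There is really no substantive obstacle; the proof is essentially bookkeeping, and the only point that deserves comment is the one-line verification that torsion-freeness forces $(0_{M}:_{M}a)=0_{M}$ for $a\neq 0_{R}$. Accordingly I would write the proof as just a couple of sentences: observe that the torsion-free hypothesis yields $(0_{M}:_{M}a)=0_{M}\subseteq Ma$, hence the corollary follows at once from Theorem \ref{the aM} (with the trivial $a=0_{R}$ case disposed of separately if one is being pedantic).
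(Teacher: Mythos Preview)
Your proposal is correct and follows exactly the paper's approach, which simply writes ``By Theorem \ref{the aM}, it is clear.'' Your explicit verification that torsion-freeness yields $(0_{M}:_{M}a)=0_{M}\subseteq Ma$, and your separate treatment of the degenerate case $a=0_{R}$, are more thorough than the paper's one-line proof, but the underlying idea is identical.
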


\begin{proof}
	By Theorem \ref{the aM}, it is clear.
\end{proof}

\begin{theorem}
	\label{the cha}Let $X$ be a proper submodule of $M$. Then the followings are equivalent:
	
	\begin{enumerate}
		\item $X$ is a $\phi$-prime submodule of $M.$
		
		\item For all ideal $I$ of $R$ with $I\nsubseteq(X:_{R}M),$ then
		
		$(X:_{M}I)=X\cup(\phi(X):_{M}I).$
		
		\item For all ideal $I$ of $R$ with $I\nsubseteq(X:_{R}M),$ then
		
		$(X:_{M}I)=X$ or $(X:_{M}I)=(\phi(X):_{M}I).$
	\end{enumerate}
\end{theorem}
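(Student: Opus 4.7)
The plan is to run a cyclic argument $(1) \Rightarrow (2) \Rightarrow (3) \Rightarrow (1)$, mirroring the structure already used in Theorem \ref{the main}, except that here the analysis happens on the module side via the colon submodules $(X:_{M}I)$ rather than on the ring side via $(X:_{R}mR)$.

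For $(1) \Rightarrow (2)$, I would fix an ideal $I$ with $I \nsubseteq (X:_{R}M)$. The inclusion $X \cup (\phi(X):_{M}I) \subseteq (X:_{M}I)$ is immediate, since $XI \subseteq X$ and $\phi(X) \subseteq X$. For the reverse inclusion, take $m \in (X:_{M}I)$, so that $mI \subseteq X$; because $m = m\cdot 1 \in mR$, this upgrades to $(mR)I \subseteq X$. If $mI \subseteq \phi(X)$ then $m \in (\phi(X):_{M}I)$ and we are done. Otherwise $mI \nsubseteq \phi(X)$, and the same inclusion $mI \subseteq (mR)I$ yields $(mR)I \nsubseteq \phi(X)$. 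The $\phi$-prime hypothesis on $X$, combined with $I \nsubseteq (X:_{R}M)$, now forces $mR \subseteq X$, whence $m \in X$.

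For $(2) \Rightarrow (3)$, I would invoke the standard fact that a submodule which equals the union of two sub-submodules must equal one of them; applying this to the decomposition $(X:_{M}I) = X \cup (\phi(X):_{M}I)$ gives the dichotomy in (3).

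For $(3) \Rightarrow (1)$, I would verify the definition directly: fix $Y \in S(M)$ and an ideal $I$ with $YI \subseteq X$ and $YI \nsubseteq \phi(X)$. If $I \subseteq (X:_{R}M)$ we are finished, so assume $I \nsubseteq (X:_{R}M)$ and apply (3). From $YI \subseteq X$ we get $Y \subseteq (X:_{M}I)$, and the two alternatives read as either $Y \subseteq X$ (done) or $Y \subseteq (\phi(X):_{M}I)$, which rewrites as $YI \subseteq \phi(X)$ and contradicts the hypothesis. The only point that genuinely deserves attention is the transfer $mI \nsubseteq \phi(X) \Rightarrow (mR)I \nsubseteq \phi(X)$ in the first implication; everything else is a straightforward manipulation of colon submodules, so I do not anticipate a serious obstacle.
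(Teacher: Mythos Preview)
Your proposal is correct and follows essentially the same cyclic $(1)\Rightarrow(2)\Rightarrow(3)\Rightarrow(1)$ strategy as the paper. The only minor difference is in $(1)\Rightarrow(2)$: the paper applies the $\phi$-prime hypothesis once to the full submodule $Y=(X:_{M}I)$, splitting on whether $(X:_{M}I)I\subseteq\phi(X)$ or not, whereas you localize to $Y=mR$ for each $m\in(X:_{M}I)$; since $I$ is two-sided one has $(mR)I=mI$, so your ``upgrade'' is in fact an equality and both arguments coincide in substance.
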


\begin{proof}
	Choose $X\in S(M).$
	
	$(1)\Longrightarrow(2)$ : Assume $X$ is $\phi$-prime$.$ Choose an ideal $I$
	which $I\nsubseteq(X:_{R}M).$ Then one can see $X\subseteq(X:_{M}I)$ and
	$(\phi(X):_{M}I)\subseteq(X:_{M}I),$ so $X\cup(\phi(X):_{M}I)\subseteq
	(X:_{M}I).$ For the other containment, since $(X:_{M}I)I\subseteq X,$ and
	one gets 2 cases:
	
	Case 1: $(X:_{M}I)I\nsubseteq\phi(X).$ Then since $(X:_{M}I)I\subseteq X$ and
	$X$ is $\phi$-prime, $I\subseteq(X:_{R}M)$ or $(X:_{M}I)\subseteq X.$ As the
	first option gives us a contradiction, it must be $(X:_{M}I)\subseteq X.$
	
	Case 2: $(X:_{M}I)I\subseteq\phi(X).$ Then we obtain $(X:_{M}I)\subseteq
	(\phi(X):_{M}I),$ so it is done.
	
	$(2)\Longrightarrow(3)$ : If a submodule is a union of two submodules, it
	equals to one of them.
	
	$(3)\Longrightarrow(1)$ : Choose an ideal $I$ in $R$, $Y\in S(M)$ with
	$YI\subseteq X$, $YI\nsubseteq\phi(X).$ If $I\subseteq(X:_{R}M),$ it is done.
	Suppose $I\nsubseteq(X:_{R}M).$ Then by (3), one can see $(X:_{M}I)=X$ or
	$(X:_{M}I)=(\phi(X):_{M}I).$ If $(X:_{M}I)=X,$ since $YI\subseteq X,$ we
	have $Y\subseteq(X:_{M}I)=X.$ So, we are done. If $(X:_{M}I)=(\phi(X):_{M}I),$
	as $YI\nsubseteq\phi(X),$ we have $Y\nsubseteq(\phi(X):_{M}I)=(X:_{M}I),$ a
	contradiction with $YI\subseteq X.$
\end{proof}

\begin{proposition}
	\label{pro IM}Let $X$ be a proper submodule of $M$ and $I$ be an ideal of $R$
	such that $MI\neq XI$ and $XI\neq X.$ Then $Y=XI$ is a $\phi$-prime submodule
	of $M$ if and only if $Y=\phi(Y).$
\end{proposition}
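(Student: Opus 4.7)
My plan is to handle the two directions separately, relying on the standing convention in the paper that $\phi(Y)\subseteq Y$ so that $Y=\phi(Y)$ is the same as $Y\subseteq\phi(Y)$. Before anything else, I would observe that the hypothesis $MI\neq XI$ already forces $Y=XI$ to be proper: if $XI=M$, then $M=XI\subseteq MI\subseteq M$, giving $MI=M=XI$, a contradiction. This is needed so that the definition of $\phi$-prime can be applied to $Y$ at all.

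For the $(\Leftarrow)$ direction, assume $Y=\phi(Y)$. Then for any $Z\in S(M)$ and any ideal $J$ of $R$ with $ZJ\subseteq Y$ and $ZJ\nsubseteq\phi(Y)$, the two conditions $ZJ\subseteq Y=\phi(Y)$ and $ZJ\nsubseteq\phi(Y)$ are contradictory, so the hypothesis in the definition of $\phi$-prime is vacuous. Hence $Y$ is $\phi$-prime.

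For the $(\Rightarrow)$ direction, assume $Y=XI$ is $\phi$-prime. The point is to apply the defining property to the factorization $X\cdot I\subseteq XI=Y$. First I would note that $XI\subseteq X$ always holds, since $X$ is a submodule of $M$ closed under the right action of $R$ and $I\subseteq R$. Now suppose for contradiction that $XI\nsubseteq\phi(Y)$. Since $Y$ is $\phi$-prime, this forces either $X\subseteq Y$ or $I\subseteq(Y:_R M)$. In the first case, $X\subseteq XI\subseteq X$ gives $X=XI$, contradicting the assumption $XI\neq X$. In the second case, $MI\subseteq XI$, and combined with the automatic inclusion $XI\subseteq MI$ this yields $MI=XI$, contradicting $MI\neq XI$. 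Hence $XI\subseteq\phi(Y)$, i.e., $Y\subseteq\phi(Y)$; together with $\phi(Y)\subseteq Y$ we conclude $Y=\phi(Y)$.

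There is no real obstacle here; the argument is essentially bookkeeping. The only subtle points are (i) verifying properness of $Y$ from $MI\neq XI$ so that the definition applies, and (ii) the observation that $XI\subseteq X$ and $XI\subseteq MI$ automatically, which are exactly what turn both alternatives coming from $\phi$-primeness into the forbidden equalities $XI=X$ and $XI=MI$.
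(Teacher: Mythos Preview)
Your proof is correct. The $(\Leftarrow)$ direction matches the paper exactly, and your preliminary remark that $MI\neq XI$ forces $Y=XI$ to be proper is a worthwhile observation that the paper leaves implicit.

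For $(\Rightarrow)$ you take a different and more direct route than the paper. The paper invokes Theorem~\ref{the cha}: it splits into the cases $I\nsubseteq(Y:_R M)$ and $I\subseteq(Y:_R M)$, and in the first case uses the dichotomy $(Y:_M I)=Y$ or $(Y:_M I)=(\phi(Y):_M I)$, together with $X\subseteq(Y:_M I)$, to reach either the contradiction $X=XI$ or the conclusion $Y=XI\subseteq(\phi(Y):_M I)I\subseteq\phi(Y)$. You instead apply the definition of $\phi$-prime directly to the factorization $X\cdot I\subseteq Y$: assuming $XI\nsubseteq\phi(Y)$ immediately yields $X\subseteq Y$ or $I\subseteq(Y:_R M)$, and each alternative collapses to one of the forbidden equalities $XI=X$ or $XI=MI$. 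Your argument is shorter, avoids the auxiliary characterization, and makes it transparent that the two hypotheses $XI\neq X$ and $MI\neq XI$ are precisely what rule out the two alternatives coming from $\phi$-primeness. The paper's approach, on the other hand, illustrates the utility of Theorem~\ref{the cha}.
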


\begin{proof}
	$\Longleftarrow:$ Let $Y=\phi(Y).$ Then obviously $Y$ is $\phi$-prime.
	
	$\Longrightarrow:$ Suppose that $Y=XI$ is a $\phi$-prime submodule. Let us
	consider Theorem \ref{the cha}. Now, we have 2 cases:
	
	Case 1 : $I$ $\nsubseteq(Y:_{R}M).$ By Theorem \ref{the cha}, one obtains
	$(Y:_{M}I)=Y$ or $(Y:_{M}I)=(\phi(Y):_{M}I).$ If $(Y:_{M}I)=Y,$ we have
	$X\subseteq(Y:_{M}I)=(XI:_{M}I)=Y=XI$, i.e., $X=XI$, a contradiction. If
	$(Y:_{M}I)=(\phi(Y):_{M}I),$ as $X\subseteq(Y:_{M}I),$ we see $Y=XI\subseteq
	(Y:_{M}I)I=(\phi(Y):_{M}I)I\subseteq\phi(Y),$ so $Y\subseteq\phi(Y).$ Then one
	obtains $\phi(Y)=Y.$ So it is done.
	
	Case 2 : $I$ $\subseteq(Y:_{R}M).$ Then $MI\subseteq Y=XI,$ so $MI=XI,$ a contradiction.
\end{proof}

\begin{corollary}
	Let $X$ be a proper submodule of $M$ and $I$ be an ideal of $R$ such that
	$MI^{n}\neq MI^{n-1}$ for some $n>1.$ Then $Y=MI^{n}$ is a $\phi$-prime
	submodule of $M$ if and only if $Y=\phi(Y).$
\end{corollary}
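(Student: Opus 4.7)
The plan is to reduce this corollary directly to Proposition \ref{pro IM} by choosing $X := MI^{n-1}$, so that the submodule under consideration becomes $XI = MI^{n-1}\cdot I = MI^{n} = Y$. Once the hypotheses of the proposition are verified, the conclusion $Y$ is $\phi$-prime $\Longleftrightarrow$ $Y=\phi(Y)$ transfers verbatim.

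The main task is to check the three conditions needed by the proposition: that $X = MI^{n-1}$ is proper, that $XI \neq X$, and that $MI \neq XI$. All three are supposed to follow from the single hypothesis $MI^{n} \neq MI^{n-1}$ together with the descending nature of the chain $MI \supseteq MI^{2}\supseteq\cdots\supseteq MI^{n}$. Concretely, I would argue contrapositively in each case: if $X = MI^{n-1} = M$, then $MI = M$, which forces $MI^{k}=M$ for every $k\geq 1$ and in particular $MI^{n} = MI^{n-1}$, a contradiction; the condition $XI \neq X$ is literally the assumption $MI^{n}\neq MI^{n-1}$; and if $MI = XI = MI^{n}$, then every term of the descending chain $MI\supseteq MI^{2}\supseteq\cdots\supseteq MI^{n}$ must coincide, again forcing $MI^{n-1} = MI^{n}$.

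Having verified these, Proposition \ref{pro IM} applied to $X = MI^{n-1}$ yields that $Y = XI = MI^{n}$ is $\phi$-prime precisely when $Y = \phi(Y)$, which is the desired equivalence.

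The main (mild) obstacle is simply the arithmetic with the powers $MI^{k}$: one must be careful that the hypothesis $MI^{n}\neq MI^{n-1}$ is strong enough to rule out the collapse cases in the proposition, which is where the monotonicity of the chain $\{MI^{k}\}$ does the work. Beyond that, there is no substantive difficulty, since the proposition has already absorbed the $\phi$-prime machinery (via Theorem \ref{the cha}).
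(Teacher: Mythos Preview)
Your proposal is correct and follows essentially the same route as the paper: set $X=MI^{n-1}$, verify the hypotheses of Proposition~\ref{pro IM}, and conclude. The paper's verification of $MI\neq XI$ is the one-line chain $XI=MI^{n}\subsetneq MI^{n-1}\subseteq MI$, slightly more direct than your contrapositive, and the paper omits the check that $X=MI^{n-1}$ is proper (which you correctly include); otherwise the arguments coincide.
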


\begin{proof}
	Let consider $X=MI^{n-1}.$ Then $XI=MI^{n}\subsetneq MI^{n-1}\subseteq MI,$
	i.e., $XI\neq MI.$ Moreover,\ $Y=XI=MI^{n}\neq MI^{n-1}=X,$ i.e., $XI\neq X.$
	Thus, by Proposition \ref{pro IM}, it is done.
\end{proof}

\begin{proposition}
	Let $I$ be a maximal ideal in $R.$ Then $MI=M$ or $MI$ is $\phi$-prime in $M.$
\end{proposition}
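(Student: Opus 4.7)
The plan is to prove the stronger statement that if $MI \neq M$, then $MI$ is actually \emph{prime} in $M$; since every prime submodule is $\phi$-prime (the condition $YJ \nsubseteq \phi(X)$ is then simply discarded), this yields the claim. The first step is to pin down $(MI:_{R}M)$: clearly $I \subseteq (MI:_{R}M)$, so by maximality of $I$ either $(MI:_{R}M) = I$ or $(MI:_{R}M) = R$. The latter would give $M = MR \subseteq MI$, contradicting our standing assumption that $MI \neq M$. Hence $(MI:_{R}M) = I$ exactly.

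For the primeness verification, I would take a submodule $Y$ of $M$ and an ideal $J$ of $R$ with $YJ \subseteq MI$, and assume $J \nsubseteq (MI:_{R}M) = I$. Then $J+I$ is an ideal properly containing $I$, so maximality forces $J+I = R$, and one can write $1 = j+i$ with $j \in J$ and $i \in I$. For any $y \in Y$ this gives the decomposition $y = yj + yi$; the summand $yj$ lies in $YJ \subseteq MI$, and since $Y \subseteq M$ the other summand $yi$ lies in $YI \subseteq MI$. Therefore $y \in MI$, so $Y \subseteq MI$. This exhibits $MI$ as prime in the sense of Dauns recalled in the introduction, and in particular as $\phi$-prime for every choice of $\phi$.

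There is essentially no obstacle once the degenerate case $MI = M$ is dismissed trivially and the identity $(MI:_{R}M) = I$ is in hand. The only subtlety worth flagging is that the maximality of $I$, combined with the Bezout-type relation $1 = j+i$, drives $Y$ into $MI$ without ever invoking $\phi$; consequently the conclusion obtained is strictly sharper than the one stated, and it holds uniformly in $\phi$.
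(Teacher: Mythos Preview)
Your proof is correct and follows exactly the same strategy as the paper: assume $MI\neq M$, show $MI$ is in fact prime, and conclude $\phi$-primeness for free. The only difference is cosmetic—the paper outsources the primeness argument to Proposition~2.12 of \cite{Beiranvand}, whereas you spell out that argument (the Bezout relation $1=j+i$ forcing $Y\subseteq MI$) in full.
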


\begin{proof}
	Let $MI\neq M.$ By the proof of Proposition 2.12 in \cite{Beiranvand}, one can
	see that $MI$ is a prime submodule of $M.$ Thus, $MI$ is $\phi$-prime.
\end{proof}

\begin{theorem}
	\label{the ideal}Let $X$ be a proper submodule of $M.$ Suppose that
	$\psi:S(R)\rightarrow S(R)\cup\{\emptyset\}$ be a function. If $X$ is $\phi
	$-prime, then $(X:_{R}Y)$ is a $\psi$-prime ideal of $R,$ for all $Y\in S(M)$
	with $Y\nsubseteq X$ and $(\phi(X):_{R}Y)\subseteq\psi((X:_{R}Y))$.
\end{theorem}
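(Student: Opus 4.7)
The plan is to verify directly that $(X:_{R}Y)$ satisfies the defining property of a $\psi$-prime ideal, using the $\phi$-primality of $X$ applied to the submodule $YI$ for a suitable ideal $I$. So I start by picking ideals $I,J$ of $R$ with $IJ\subseteq(X:_{R}Y)$ and $IJ\nsubseteq\psi((X:_{R}Y))$, and I aim to conclude $I\subseteq(X:_{R}Y)$ or $J\subseteq(X:_{R}Y)$.

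The first step translates the containment $IJ\subseteq(X:_{R}Y)$ into a statement about submodules of $M$: it gives $Y(IJ)\subseteq X$, and since $YI\in S(M)$ and $J\in S(R)$, I can rewrite this as $(YI)J\subseteq X$. The second step uses the hypothesis $(\phi(X):_{R}Y)\subseteq\psi((X:_{R}Y))$ to rule out the ``bad'' case for applying $\phi$-primality: if $(YI)J\subseteq\phi(X)$, then $IJ\subseteq(\phi(X):_{R}Y)\subseteq\psi((X:_{R}Y))$, contradicting our choice of $I,J$. Hence $(YI)J\nsubseteq\phi(X)$.

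Now I can apply the $\phi$-prime hypothesis on $X$ to the submodule $YI$ and the ideal $J$, obtaining either $YI\subseteq X$ or $J\subseteq(X:_{R}M)$. In the first case $I\subseteq(X:_{R}Y)$ directly. In the second case, since $Y\subseteq M$, we have $YJ\subseteq MJ\subseteq X$, which again gives $J\subseteq(X:_{R}Y)$. Either way, one of the two required containments holds, so $(X:_{R}Y)$ is $\psi$-prime.

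I do not expect a real obstacle here: the argument is essentially the standard ``pull-back'' of primality along the colon operation, and the only delicate point is making sure the excluded case $(YI)J\subseteq\phi(X)$ is ruled out, which is exactly what the extra assumption $(\phi(X):_{R}Y)\subseteq\psi((X:_{R}Y))$ is designed to give. The hypothesis $Y\nsubseteq X$ ensures $(X:_{R}Y)$ is a proper ideal of $R$ (otherwise $1\in(X:_{R}Y)$ would force $Y\subseteq X$), so the properness condition for being $\psi$-prime is also secured.
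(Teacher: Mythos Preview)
Your proof is correct and follows essentially the same route as the paper's own argument: start from $IJ\subseteq(X:_{R}Y)$ with $IJ\nsubseteq\psi((X:_{R}Y))$, rewrite as $(YI)J\subseteq X$, use the hypothesis $(\phi(X):_{R}Y)\subseteq\psi((X:_{R}Y))$ to get $(YI)J\nsubseteq\phi(X)$, apply $\phi$-primality, and finish by noting $(X:_{R}M)\subseteq(X:_{R}Y)$. Your explicit verification that $(X:_{R}Y)$ is proper (via $Y\nsubseteq X$) is a point the paper's proof leaves implicit, so your write-up is in fact slightly more complete.
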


\begin{proof}
	Suppose that $X$ is a $\phi$-prime submodule of $M$ and $Y$ is a submodule of
	$M$ such that $Y\nsubseteq X$ and $(\phi(X):_{R}Y)\subseteq\psi((X:_{R}Y))$.
	Let $IJ\subseteq\left(  X:_{R}Y\right)  $ and $IJ\nsubseteq\psi((X:_{R}Y))$
	for two ideals $I,J$ of $R.$ Then $(YI)J\subseteq X$ and $(YI)J\nsubseteq
	\phi(X),$ since $(\phi(X):_{R}Y)\subseteq\psi((X:_{R}Y)).$ By our hypothesis,
	$J\subseteq(X:_{R}M)$ or $YI\subseteq{X}$. If $YI\subseteq{X,}$ i.e.,
	$I\subseteq(X:_{R}Y)$, it is done$.$ If $J\subseteq(X:_{R}M)$, since
	$(X:_{R}M)\subseteq(X:_{R}Y),$ we see $J\subseteq(X:_{R}Y)$. Consequently,
	$\left(  X:_{R}Y\right)  $ is a $\psi$-prime ideal of $R$.
\end{proof}

\begin{corollary}
	\label{cor ideal}Let $X$ be a proper submodule of $M.$ Suppose that
	$\psi:S(R)\rightarrow S(R)\cup\{\emptyset\}$ be a function with $(\phi
	(X):_{R}M)\subseteq\psi((X:_{R}M)).$ If $X$ is a $\phi$-prime submodule of
	$M,$ then $(X:_{R}M)$ is a $\psi$-prime ideal of $R.$
\end{corollary}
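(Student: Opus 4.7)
The plan is to obtain Corollary \ref{cor ideal} as an immediate specialization of Theorem \ref{the ideal}, by choosing the submodule $Y$ in that theorem to be $M$ itself. This is the natural choice because $(X:_R M)$ is exactly the colon ideal that appears in the conclusion of the corollary, and it matches the form $(X:_R Y)$ from the theorem with $Y = M$.

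First, I would verify that the two hypotheses of Theorem \ref{the ideal} hold in this setup. The condition $Y \nsubseteq X$ becomes $M \nsubseteq X$, which holds because $X$ is assumed to be a proper submodule of $M$. The condition $(\phi(X):_R Y) \subseteq \psi((X:_R Y))$ becomes exactly $(\phi(X):_R M) \subseteq \psi((X:_R M))$, which is precisely the standing hypothesis of the corollary.

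With both hypotheses of Theorem \ref{the ideal} verified, the theorem directly concludes that $(X:_R M)$ is a $\psi$-prime ideal of $R$, which is exactly the statement to prove. No further computation or case analysis is needed.

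There is no genuine obstacle here; the corollary is essentially a restatement of the $Y = M$ case of the preceding theorem, and the only thing to be careful about is confirming that $M$ is indeed an admissible choice for $Y$ (which follows from $X \subsetneq M$).
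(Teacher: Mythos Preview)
Your proposal is correct and matches the paper's own proof exactly: the paper simply writes ``Set $Y=M$ in Theorem \ref{the ideal}.'' Your verification that $M \nsubseteq X$ (since $X$ is proper) and that the colon-ideal hypothesis specializes correctly is precisely the justification needed.
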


\begin{proof}
	Set $Y=M$ in Theorem \ref{the ideal}.
\end{proof}

\section{$\phi-$Prime submodules in multiplication modules}

\label{Sec:3}

Note that, an $R$-module $M$ is called a \textit{multiplication module} if
there is an ideal $I$ of $R$ such that $X=MI,$ for all $X\in S(M),$ see
\cite{tug}. Also, in a multiplication module, one can see $X=M(X:_{R}M)$,\ for
all $X\in S(M),$ see \cite{tug}.

Let $X$ and $Y$ be two submodules of a multiplication $R$-module $M$ with
$X=M(X:_{R}M)$ and $Y=M(Y:_{R}M)$. The product of $X$ and $Y$ is denoted by
$XY$ and it is defined by $XY=M(X:_{R}M)(Y:_{R}M)$. It is clear that the
product is well-defined.

\begin{proposition}
	\label{pro multip}Let $M$ be multiplication and $X\in S(M).$ Then if $X$ is
	$\phi$-prime, then for $Y_{1}$, $Y_{2}\in S(M),$ $Y_{1}Y_{2}\subseteq X$ and
	$Y_{1}Y_{2}\nsubseteq\phi(X)$ implies that $Y_{1}\subseteq X$ or
	$Y_{2}\subseteq X.$
\end{proposition}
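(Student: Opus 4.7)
The plan is to exploit the defining property of a multiplication module, namely that every submodule $Y$ satisfies $Y = M(Y:_R M)$, in order to rewrite the product $Y_1 Y_2$ as a submodule times an ideal, so that the hypothesis on $\phi$-primality directly applies.

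First I would set $I := (Y_2 :_R M)$, which is an ideal of $R$. Using $Y_2 = M(Y_2:_R M) = MI$ together with $Y_1 = M(Y_1:_R M)$, the definition of the product of submodules in a multiplication module gives
\[
Y_1 I = M(Y_1:_R M)\, I = M(Y_1:_R M)(Y_2:_R M) = Y_1 Y_2 .
\]
Thus the hypotheses $Y_1 Y_2 \subseteq X$ and $Y_1 Y_2 \nsubseteq \phi(X)$ translate verbatim into $Y_1 I \subseteq X$ and $Y_1 I \nsubseteq \phi(X)$.

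Next I would apply the $\phi$-prime hypothesis on $X$ to the pair $(Y_1, I)$: this yields $Y_1 \subseteq X$ or $I \subseteq (X:_R M)$. In the first case we are done. In the second case, I would multiply through by $M$ on the left, using once more that $M$ is multiplication:
\[
Y_2 = M(Y_2:_R M) = MI \subseteq M(X:_R M) = X,
\]
which is the other desired conclusion.

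The argument is essentially a two-line reduction, so there is no real obstacle; the only subtle point to check is that the product of submodules in a multiplication module is well-defined and really equals $M(Y_1:_R M)(Y_2:_R M)$, which is recorded in the preamble to this section, and that the identity $Y_1 I = Y_1 Y_2$ holds as displayed. Once this is observed, the result follows immediately from the definition of $\phi$-prime applied with the ideal $I = (Y_2:_R M)$.
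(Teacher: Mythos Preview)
Your proof is correct and follows essentially the same route as the paper: both use $Y_1=M(Y_1:_RM)$ to rewrite $Y_1Y_2=M(Y_1:_RM)(Y_2:_RM)$ as a submodule times the ideal $(Y_2:_RM)$, apply the $\phi$-prime definition, and then use $Y_2=M(Y_2:_RM)\subseteq M(X:_RM)=X$ in the second case. Your introduction of the shorthand $I=(Y_2:_RM)$ and the explicit identity $Y_1I=Y_1Y_2$ is just a notational repackaging of the paper's argument.
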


\begin{proof}
	Let $Y_{1}$, $Y_{2}$ be any submodule in $M$ with $Y_{1}Y_{2}\subseteq X$ and
	$Y_{1}Y_{2}\nsubseteq\phi(X).$ As $M$ is multiplication, we know that
	$Y_{1}=M(Y_{1}:_{R}M)$ and $Y_{2}=M(Y_{2}:_{R}M)$. Then $Y_{1}Y_{2}%
	=M(Y_{1}:_{R}M)(Y_{2}:_{R}M)\subseteq X$ and $Y_{1}Y_{2}\nsubseteq\phi(X).$
	Since $X$ is $\phi$-prime, one can see $M(Y_{1}:_{R}M)\subseteq X$ or
	$(Y_{2}:_{R}M)\subseteq(X:_{R}M).$ This implies that $Y_{1}\subseteq X$ or
	$Y_{2}=M(Y_{2}:_{R}M)\subseteq M(X:_{R}M)=X.$
\end{proof}
\bigskip Note that we say $M$ is a cancellation module if $MI=MJ$ implies that $I=J$ for two ideals $I,J$ of $R.$ For the definition of a cancellation
module over commutative ring, see \cite{cancel}.

\begin{corollary}
	\label{cor multip}	Let $M$ be multiplication and cancellation. For $X\in S(M),$ the statements
	are equivalent:
	\begin{enumerate}
		\item $X$ is $\phi$-prime.
		
		\item For $Y_{1}$, $Y_{2}\in S(M),$ if $Y_{1}Y_{2}\subseteq X$ and $Y_{1}
		Y_{2}\nsubseteq\phi(X),$ then $Y_{1}\subseteq X$ or $Y_{2}\subseteq X.$
	\end{enumerate}
	
\end{corollary}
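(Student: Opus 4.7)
The direction $(1) \Rightarrow (2)$ is immediate from Proposition \ref{pro multip}, so the whole task is to prove $(2) \Rightarrow (1)$. My plan is to assume $(2)$ and deduce that $X$ is $\phi$-prime by rewriting an arbitrary ``submodule-times-ideal'' inclusion as a ``submodule-times-submodule'' inclusion, which is the form in which $(2)$ can be applied.

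Concretely, suppose $(2)$ holds and take $Y \in S(M)$ together with an ideal $I$ of $R$ such that $YI \subseteq X$ and $YI \nsubseteq \phi(X)$. I would set $Y_1 := Y$ and $Y_2 := MI$; both are submodules of $M$, so $(2)$ applies once I identify the product $Y_1 Y_2$ with $YI$. By the definition of the product in a multiplication module,
\[
Y_1 Y_2 \;=\; M\,(Y_1 :_R M)\,(Y_2 :_R M) \;=\; M\,(Y:_R M)\,(MI :_R M).
\]

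The crux is the identity $(MI :_R M) = I$. The containment $I \subseteq (MI :_R M)$ is trivial. For the reverse, observe that $M \cdot I \subseteq M(MI :_R M) \subseteq MI = M \cdot I$, whence $M \cdot I = M \cdot (MI :_R M)$; the cancellation hypothesis on $M$ then forces $I = (MI :_R M)$. Combined with $M(Y :_R M) = Y$ (which holds because $M$ is a multiplication module), this gives $Y_1 Y_2 = M(Y:_R M)\,I = YI$. Hence $Y_1 Y_2 = YI \subseteq X$ and $Y_1 Y_2 \nsubseteq \phi(X)$, so $(2)$ yields $Y \subseteq X$ or $MI \subseteq X$. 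The second case rewrites as $I \subseteq (X :_R M)$, so in either case the defining condition of a $\phi$-prime submodule is verified.

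The main obstacle is precisely the equality $(MI :_R M) = I$: without the cancellation assumption one only has the trivial containment $I \subseteq (MI :_R M)$, so $Y_1 Y_2$ could a priori be strictly larger than $YI$, and one would not be entitled to transfer the hypothesis ``$YI \nsubseteq \phi(X)$'' to ``$Y_1 Y_2 \nsubseteq \phi(X)$'' in the direction needed to invoke $(2)$. Everything else in the argument is formal manipulation with the standard multiplication-module identity $Y = M(Y:_R M)$ and the definition of the submodule product.
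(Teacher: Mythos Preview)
Your proof is correct and follows essentially the same route as the paper: set $Y_1=Y$, $Y_2=MI$, use the cancellation hypothesis to obtain $(MI:_R M)=I$, compute $Y_1Y_2=M(Y:_R M)I=YI$, and then invoke~(2). The only cosmetic difference is that the paper derives $MI=M(MI:_R M)$ directly from the multiplication-module identity $N=M(N:_R M)$ applied to $N=MI$, whereas you obtain the same equality via the chain of inclusions $MI\subseteq M(MI:_R M)\subseteq MI$; both arguments are equally valid and the remainder is identical.
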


\begin{proof}
	$(1)\Longrightarrow(2)$ : By Proposition \ref{pro multip}.
	
	$(2)\Longrightarrow(1)$ : Choose an ideal $I\in S(R)$, $Y\in S(M)$ with
	$YI\subseteq X$ and $YI\nsubseteq\phi(X).$ Since $M$ is multiplication,
	$Y=M(Y:_{R}M).$ Then we have $M(Y:_{R}M)I=YI\subseteq X$ and $YI\nsubseteq
	\phi(X).$ Also, as $M$ is multiplication, $MI=M(MI:_{R}M).$ Then this implies
	that $I=(MI:_{R}M),$ since $M$ is cancellation. Hence $Y(MI)=M(Y:_{R}%
	M)(MI:_{R}M)=M(Y:_{R}M)I=YI.$ So, we have $Y(MI)\subseteq X$ and
	$Y(MI)\nsubseteq\phi(X).$ Then by (2), one see $Y\subseteq X$ or $MI\subseteq
	X.$ This means that $Y\subseteq X$ or $I\subseteq(X:_{R}M).$
\end{proof}

\begin{theorem}
	\label{the ideal iff}Let $M$ be a multiplication $R$-module and $X$ be a
	proper submodule of $M.$ Suppose that $\psi:S(R)\rightarrow S(R)\cup
	\{\emptyset\}$ be a function with $(\phi(X):_{R}M)=\psi((X:_{R}M))$. Then the
	followings are equivalent:
	
	\begin{enumerate}
		\item $X$ is $\phi$-prime in $M.$
		
		\item $(X:_{R}M)$ is a $\psi$-prime ideal in $R$.
	\end{enumerate}
\end{theorem}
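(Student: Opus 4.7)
The plan is to handle the two implications separately, exploiting the fact that in a multiplication module, every submodule has the form $MA$ for some ideal $A$, and submodule containment translates neatly into ideal containment via the colon $(\,\cdot\,:_R M)$.

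For $(1)\Rightarrow(2)$, I would simply invoke Corollary \ref{cor ideal}. That corollary requires only $(\phi(X):_R M)\subseteq \psi((X:_R M))$, which follows immediately from the assumed equality $(\phi(X):_R M)=\psi((X:_R M))$. So if $X$ is $\phi$-prime, then $(X:_R M)$ is $\psi$-prime. No multiplication hypothesis is used here.

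For $(2)\Rightarrow(1)$, suppose $(X:_R M)$ is $\psi$-prime. Take $Y\in S(M)$ and an ideal $I$ of $R$ with $YI\subseteq X$ and $YI\nsubseteq \phi(X)$; I want to deduce $Y\subseteq X$ or $I\subseteq (X:_R M)$. Because $M$ is multiplication, write $Y=MJ$ where $J=(Y:_R M)$. Then $MJI=YI\subseteq X$ translates to $JI\subseteq (X:_R M)$. Next I would verify the non-containment in the ideal world: if $JI\subseteq (\phi(X):_R M)$, then $MJI\subseteq \phi(X)$, i.e., $YI\subseteq \phi(X)$, contradicting the hypothesis; hence $JI\nsubseteq (\phi(X):_R M)=\psi((X:_R M))$. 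Now I apply the $\psi$-primeness of $(X:_R M)$ to get $J\subseteq (X:_R M)$ or $I\subseteq (X:_R M)$. In the first case, $Y=MJ\subseteq M(X:_R M)=X$ (using the identity $X=M(X:_R M)$ valid in any multiplication module); in the second case we are already done.

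There is no real obstacle: both directions reduce to transporting the given submodule data to ideals via the colon operation, and the hypothesis $(\phi(X):_R M)=\psi((X:_R M))$ is precisely what is needed to match the two ``avoidance'' conditions. The only point that deserves a sentence of care is the equivalence $MJI\subseteq \phi(X)\iff JI\subseteq (\phi(X):_R M)$, which is immediate from the definition of the colon ideal. Once that bookkeeping is in place, the proof is essentially a two-line translation and an appeal to Corollary \ref{cor ideal}.
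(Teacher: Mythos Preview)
Your proposal is correct and follows essentially the same route as the paper: for $(1)\Rightarrow(2)$ you both invoke Corollary~\ref{cor ideal}, and for $(2)\Rightarrow(1)$ you both write $Y=M(Y:_RM)$, pass to the ideal inclusion $(Y:_RM)I\subseteq(X:_RM)$, use the hypothesis $(\phi(X):_RM)=\psi((X:_RM))$ to obtain $(Y:_RM)I\nsubseteq\psi((X:_RM))$, apply $\psi$-primeness, and finish with $Y=M(Y:_RM)\subseteq M(X:_RM)=X$. The only cosmetic difference is your naming $J=(Y:_RM)$ and your aside that the multiplication hypothesis is not needed for the forward direction.
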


\begin{proof}
	$(1)\Longrightarrow(2):$ By Corollary \ref{cor ideal}.
	
	$(2)\Longrightarrow(1)$ : Assume that $(X:_{R}M)$ is $\psi$-prime. Choose an
	ideal $I$ of $R$ and a submodule $Y$ of $M$ with $YI\subseteq X$ and
	$YI\nsubseteq\phi(X)$. As $M$ is multiplication, $Y=M(Y:_{R}M).$ Hence
	$M(Y:_{R}M)I\subseteq X$ and $M(Y:_{R}M)I\nsubseteq\phi(X).$ Then one gets
	$(Y:_{R}M)I\subseteq{(X:_{R}M)}$ and $(Y:_{R}M)I\nsubseteq(\phi(X):_{R}M).$
	Since $(\phi(X):_{R}M)=\psi((X:_{R}M))$, $(Y:_{R}M)I\nsubseteq\psi
	((X:_{R}M)).$ By our hypothesis, $I\subseteq(X:_{R}M)$ or $(Y:_{R}%
	M)\subseteq{(X:_{R}M).}$ If $I\subseteq(X:_{R}M),$ it is done. If $(Y:_{R}%
	M)\subseteq{(X:_{R}M),}$ as $M$ is multiplication, one can see $Y=M(Y:_{R}M)\subseteq M{(X:_{R}M)=X.}$ Therefore, $X$ is $\phi$-prime.
\end{proof}

\bigskip

Recall that if there exists an element $s\in R$ with $r=rsr,$ for all $r\in
R$, $R$ is called \textit{von-Neumann regular, }see\textit{ \cite{tug}. }Also,
\textit{the center of a ring} $R$ is denoted by $Center(R).$

\begin{lemma}
	\cite{Beiranvand}\label{lemma 1} Assume that $M$ is multiplication, $R$ is a
	von-Neumann regular ring and $J\subseteq Center(R)$ is an ideal in $R.$ Then
	$X\cap MJ=(X:_{M}J)J,$ for any submodule $X$ of $M$.
\end{lemma}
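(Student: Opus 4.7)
The plan is to prove the equality by establishing the two inclusions separately. The inclusion $(X:_{M}J)J\subseteq X\cap MJ$ is immediate: any generator $tj$ with $tJ\subseteq X$ and $j\in J$ automatically lies in $X$ (by definition of the colon module) and in $MJ$ (since $t\in M$), so the submodule $(X:_{M}J)J$ is contained in $X\cap MJ$.

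For the harder inclusion $X\cap MJ\subseteq(X:_{M}J)J$, I would take $x\in X\cap MJ$ and write $x=\sum_{i=1}^{n}m_{i}j_{i}$ with $m_{i}\in M$ and $j_{i}\in J$. The strategy is to produce a single element $f\in J$ satisfying $x=xf$. The tool is a central idempotent generator of the ideal $\sum_{i}Rj_{i}$: since each $j_{i}\in J\subseteq Center(R)$, the $j_{i}$ commute with all of $R$, so $\sum_{i}Rj_{i}=\sum_{i}j_{i}R$ is a finitely generated one-sided (and in fact two-sided) ideal lying inside $J$. Von-Neumann regularity of $R$ is exactly the hypothesis which guarantees that every finitely generated one-sided ideal is principal and generated by an idempotent, so $\sum_{i}Rj_{i}=fR$ for some idempotent $f$; since $f$ lies in the generating sum, $f\in J$, and then $f$ is automatically central.

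With such an $f$ in hand, the remainder is computational. Writing $j_{i}=fr_{i}$ and substituting gives $x=\sum m_{i}fr_{i}=(\sum m_{i}r_{i})f=yf$ with $y:=\sum m_{i}r_{i}\in M$ (using centrality of $f$ to pull it to the right). Then $xf=yf^{2}=yf=x$. Separately, $x\in X$ together with $X$ being a submodule gives $xJ\subseteq X$, i.e.\ $x\in(X:_{M}J)$. Combining these two facts, $x=xf\in(X:_{M}J)\,J$, completing the inclusion.

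The main obstacle is the existence of the central idempotent generator $f$: this rests on the classical fact that finitely generated right ideals in a von-Neumann regular ring are principal and generated by idempotents, and the centrality hypothesis on $J$ is precisely what is needed to keep this idempotent inside $J$ (so that $f$ actually belongs to $J$ and is central). Incidentally, the multiplication-module hypothesis on $M$ does not seem to play a role in the argument itself; it is simply the ambient setting in which the lemma is to be applied.
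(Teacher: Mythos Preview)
The paper does not supply its own proof of this lemma; it is quoted from \cite{Beiranvand} without argument. So there is no in-paper proof to compare against, and your task reduces to whether your argument stands on its own.

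It does. The easy inclusion $(X:_{M}J)J\subseteq X\cap MJ$ is handled correctly. For the reverse inclusion, the key step is the existence of a central idempotent $f\in J$ with $x=xf$, and your derivation is sound: the finitely generated right ideal $\sum_i j_iR$ is generated by an idempotent $f$ because $R$ is von-Neumann regular; $f$ lies in $\sum_i j_iR\subseteq J$ because $J$ is an ideal; and $f$ is central because $J\subseteq Center(R)$. Writing each $j_i=fr_i=r_if$ then gives $x=(\sum_i m_ir_i)f$, hence $xf=x$. Since $x\in X$ forces $xJ\subseteq X$, you conclude $x=xf\in(X:_{M}J)J$.

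Your closing remark is also accurate: the multiplication hypothesis on $M$ is not used in this argument. It is part of the standing assumptions of the section in which the lemma is applied, but the identity $X\cap MJ=(X:_{M}J)J$ itself only requires $R$ von-Neumann regular and $J$ a central ideal.
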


\begin{lemma}
	\cite{Beiranvand} \label{lemma 2}Assume that $M$ is multiplication, $R$ is a
	von-Neumann regular ring and $J\subseteq Center(R)$ is an ideal in $R.$ If for all $Y,Z\in S(M),$
	$YJ\subseteq ZJ$ implies that $Y\subseteq Z,$ then
	$(XI:_{M}J)=(X:_{M}J)I$ for $X\in S(MJ)$ and any ideal $I$ of $R$.
\end{lemma}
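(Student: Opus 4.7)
The plan is to prove the equality $(XI:_{M}J)=(X:_{M}J)I$ by establishing both inclusions, with the nontrivial direction exploiting Lemma \ref{lemma 1} together with the given cancellation-type hypothesis.

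The inclusion $(X:_{M}J)I\subseteq(XI:_{M}J)$ is a straightforward direct verification. For an arbitrary element $m=\sum_{k}n_{k}i_{k}$ with $n_{k}\in(X:_{M}J)$ and $i_{k}\in I$, and for any $j\in J$, centrality of $j$ gives $i_{k}j=ji_{k}$, while $n_{k}j\in X$ by the definition of $(X:_{M}J)$. Hence $mj=\sum_{k}(n_{k}j)i_{k}\in XI$. Since this holds for every $j\in J$, we conclude $m\in(XI:_{M}J)$.

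For the harder inclusion, the key observation is that $X\in S(MJ)$ means $X\subseteq MJ$, so Lemma \ref{lemma 1} applied to $X$ yields
\[
X=X\cap MJ=(X:_{M}J)J.
\]
Using that $J\subseteq Center(R)$ (and therefore $IJ=JI$), it follows that
\[
(X:_{M}J)I\cdot J=(X:_{M}J)\cdot JI=\bigl((X:_{M}J)J\bigr)I=XI.
\]
On the other hand, by the definition of the residual, $(XI:_{M}J)\cdot J\subseteq XI$. Combining these,
\[
(XI:_{M}J)\cdot J\subseteq XI=(X:_{M}J)I\cdot J,
\]
and invoking the hypothesis ``$YJ\subseteq ZJ$ implies $Y\subseteq Z$'' with the choice $Y=(XI:_{M}J)$ and $Z=(X:_{M}J)I$ delivers $(XI:_{M}J)\subseteq(X:_{M}J)I$.

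The main point of friction is the identification $X=(X:_{M}J)J$; this is where the three hypotheses (multiplication module, von Neumann regular ring, and $X\in S(MJ)$) are fused through Lemma \ref{lemma 1}. Once this identification is in hand, the rest is a bookkeeping exercise whose only subtlety is the commutation of $I$ and $J$, which is precisely what the central placement of $J$ provides; the cancellation hypothesis then serves as the formal analogue of ``dividing by $J$'' that removes the trailing factor from both sides.
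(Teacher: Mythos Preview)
The paper does not supply a proof of this lemma: it is quoted without argument from \cite{Beiranvand}, so there is nothing internal to compare against. Your argument is correct and is essentially the standard one: the easy inclusion is direct, and the reverse inclusion is obtained by identifying $X=(X:_{M}J)J$ via Lemma~\ref{lemma 1} (using $X\subseteq MJ$), rewriting $XI=((X:_{M}J)I)J$ with the help of $IJ=JI$, and then invoking the cancellation hypothesis on $(XI:_{M}J)J\subseteq((X:_{M}J)I)J$.
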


\begin{theorem}
	Let $M$ be a multiplication $R$-module and $R$ be a von-Neumann regular ring.
	Let $I\subseteq Center(R)$ be an ideal of $R$ such that $YI\subseteq ZI$
	implies that $Y\subseteq Z$ for all $Y,Z\in S(M).$ Let $\phi
	((X:_{M}I))=(\phi(X):_{M}I).$ Then $X\in S(MI)$ is $\phi$-prime
	$\Longleftrightarrow$ $(X:_{M}I)\in S(M)$ is $\phi$-prime$.$
\end{theorem}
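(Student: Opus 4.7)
The plan is to use the cancellation hypothesis $YI\subseteq ZI\Rightarrow Y\subseteq Z$ together with Lemma \ref{lemma 1} to move inclusions between the level of $M$ and the level of $MI$. Two easy preliminary observations will be used throughout: because $I\subseteq Center(R)$, for every ideal $J$ of $R$ we have $IJ=JI$, so $(NI)J=(NJ)I$ for every submodule $N$ of $M$; and $X$ is proper in $MI$ exactly when $(X:_M I)$ is proper in $M$, since $X=MI$ forces $(X:_M I)=M$ and conversely.

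For the direction $(\Rightarrow)$, I would assume $X$ is $\phi$-prime in $MI$ and pick $Y\in S(M)$ together with an ideal $J$ of $R$ satisfying $YJ\subseteq (X:_M I)$ and $YJ\not\subseteq\phi((X:_M I))=(\phi(X):_M I)$. Multiplying by $I$ and swapping $IJ=JI$ yields $(YI)J\subseteq X$ and $(YI)J\not\subseteq\phi(X)$. Since $YI\in S(MI)$, the $\phi$-primeness of $X$ in $MI$ then forces either $YI\subseteq X$, which gives $Y\subseteq (X:_M I)$, or $J\subseteq (X:_R MI)$, which gives $MJI=MIJ\subseteq X$, hence $MJ\subseteq (X:_M I)$ and $J\subseteq ((X:_M I):_R M)$.

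For the direction $(\Leftarrow)$, I would assume $(X:_M I)$ is $\phi$-prime in $M$, take $Y\in S(MI)$ and an ideal $J$ with $YJ\subseteq X$ and $YJ\not\subseteq\phi(X)$, and apply Lemma \ref{lemma 1} to both $Y$ and $X$ (which sit inside $MI$) to get $Y=(Y:_M I)I$ and $X=(X:_M I)I$. Then $(Y:_M I)JI=YJ\subseteq X=(X:_M I)I$, and the cancellation hypothesis strips the trailing $I$ to produce $(Y:_M I)J\subseteq (X:_M I)$. If $(Y:_M I)J$ were contained in $(\phi(X):_M I)=\phi((X:_M I))$, then multiplying by $I$ would yield $YJ\subseteq\phi(X)$, a contradiction; so the $\phi$-primeness of $(X:_M I)$ supplies either $(Y:_M I)\subseteq (X:_M I)$, whence $Y=(Y:_M I)I\subseteq (X:_M I)I=X$, or $J\subseteq ((X:_M I):_R M)$, whence $MIJ=MJI\subseteq X$ and $J\subseteq (X:_R MI)$.

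The delicate point will be the converse direction: without the cancellation hypothesis, the step passing from $(Y:_M I)JI\subseteq (X:_M I)I$ to $(Y:_M I)J\subseteq (X:_M I)$ is not available, and we could not descend a hypothesis from $MI$ down to $M$. Centrality of $I$ is needed only to commute $IJ$ with $JI$, while Lemma \ref{lemma 1} supplies the canonical form $Y=(Y:_M I)I$ for submodules of $MI$; Lemma \ref{lemma 2} does not appear to play a role in this argument.
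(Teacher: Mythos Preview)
Your proof is correct and follows essentially the same path as the paper's. The only noteworthy difference is in the $(\Leftarrow)$ direction: where you use Lemma~\ref{lemma 1} to write $Y=(Y:_M I)I$ and $X=(X:_M I)I$ and then invoke the cancellation hypothesis directly to pass from $(Y:_M I)JI\subseteq (X:_M I)I$ to $(Y:_M I)J\subseteq (X:_M I)$, the paper instead appeals to Lemma~\ref{lemma 2} to obtain $(Y:_M I)J=(YJ:_M I)\subseteq (X:_M I)$ in one stroke. Since Lemma~\ref{lemma 2} itself carries the cancellation hypothesis, these are two packagings of the same idea; your route is slightly more elementary and makes your closing remark that Lemma~\ref{lemma 2} is dispensable accurate.
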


\begin{proof}
	$\Longrightarrow:$ Assume that $X\in S(MI)$ is $\phi$-prime. Choose an ideal
	$J$ of $R$, $Y\in S(M)$ with $YJ\subseteq(X:_{M}I)$ and $YJ\nsubseteq
	\phi((X:_{M}I)).$ Then clearly $YJI\subseteq X.$ We show that $YJI\nsubseteq
	\phi(X).$ If $YJI\subseteq\phi(X),$ then\ $YJ\subseteq(\phi(X):_{M}%
	I)=\phi((X:_{M}I)),$ a contradiction. By $I\subseteq Center(R),$ one can see
	$YJI=YIJ.$ Hence, $YIJ\subseteq X$ and $YIJ\nsubseteq\phi$$(X)$ implies
	$YI\subseteq X$ or $J\subseteq(X:_{R}MI),$ since $X$ is $\phi$-prime submodule
	of $MI.$ Moreover, as $I\subseteq Center(R),$ we see $(X:_{R}MI)=((X:_{M}%
	I):_{R}M).$ So, $YI\subseteq X$ or $J\subseteq(X:_{R}MI)$ implies
	$Y\subseteq(X:_{M}I)$ or $J\subseteq((X:_{M}I):_{R}M).$
	
	$\Longleftarrow:$ Let $(X:_{M}I)$ be $\phi$-prime in $M$ for $X\in S(MI).$
	Choose an ideal $J$ of $R$, $Y\in S(MI)$ with $YJ\subseteq X$, $YJ\nsubseteq
	\phi(X).$ Then we see that $(Y:_{M}I)J=(YJ:_{M}I)$ $\subseteq(X:_{M}I)$ by
	Lemma \ref{lemma 2}. Now, let us prove $(Y:_{M}I)J\nsubseteq$ $\phi
	((X:_{M}I)).$ Indeed, if $(Y:_{M}I)J\subseteq$ $\phi((X:_{M}I))=(\phi
	(X):_{M}I),$ then $(Y:_{M}I)JI=(Y:_{M}I)IJ\subseteq(\phi(X):_{M}I)I,$ as
	$I\subseteq Center(R).$ By Lemma \ref{lemma 1}, we get $YJ=(Y\cap
	MI)J=(Y:_{M}I)IJ\subseteq(\phi(X):_{M}I)I=\phi(X)\cap MI$ $=\phi(X),$ a
	contradiction. Hence, as $(X:_{M}I)$ is $\phi$-prime, one can see
	$(Y:_{M}I)\subseteq(X:_{M}I)$ or $J\subseteq((X:_{M}I):_{R}M).$ The first
	option gives us $Y=Y\cap MI=(Y:_{M}I)I\subseteq(X:_{M}I)I=X$$\cap MI=X,$ by
	Lemma \ref{lemma 1}. The second option means that $J\subseteq((X:_{M}%
	I):_{R}M)=(X:_{R}MI),$ as $I\subseteq Center(R).$ Thus we are done.
\end{proof}

\section{The radical of a submodule}

\bigskip

In the following definition, we shall introduce the concept of $\phi$-$m$-system.

\begin{definition}
	\label{def system} $\emptyset\neq S\subseteq M$ is called a $\phi$-$m$-system
	if $(Y_{1}+Y_{2})\cap S\neq\emptyset,$ $(Y_{1}+MI)\cap S\neq\emptyset$ and
	$Y_{2}I\nsubseteq\phi(<S^{c}>),$ then $(Y_{1}+Y_{2}I)\cap S\neq\emptyset$ for
	$\forall Y_{1},Y_{2}\in S(M)$ and any ideal $I$ of $R$, where $S^{c}=M-S.$
\end{definition}

\begin{proposition}
	\label{system} For $X\in S(M)$, $X$ is $\phi$-prime $\Longleftrightarrow$
	$S=M-X$ is a $\phi$-$m$-system.
\end{proposition}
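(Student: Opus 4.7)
The plan is to unpack both definitions directly and exploit the key observation that if $S = M - X$, then $S^c = X$ is already a submodule, so $\langle S^c\rangle = X$ and $\phi(\langle S^c\rangle) = \phi(X)$. Note also that $S \neq \emptyset$ is equivalent to $X$ being proper.

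For the forward direction, I would assume $X$ is $\phi$-prime and verify the $\phi$-$m$-system condition for $S = M - X$. Given $Y_1, Y_2 \in S(M)$ and an ideal $I$ satisfying $(Y_1 + Y_2) \cap S \neq \emptyset$, $(Y_1 + MI) \cap S \neq \emptyset$, and $Y_2 I \nsubseteq \phi(X)$, I would argue by contradiction, supposing $(Y_1 + Y_2 I) \cap S = \emptyset$, i.e., $Y_1 + Y_2 I \subseteq X$. Since $0_M \in Y_1$ and $0_M \in Y_2 I$, this forces both $Y_1 \subseteq X$ and $Y_2 I \subseteq X$. Combined with $Y_2 I \nsubseteq \phi(X)$, the $\phi$-prime hypothesis yields $Y_2 \subseteq X$ or $I \subseteq (X:_R M)$. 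The first alternative gives $Y_1 + Y_2 \subseteq X$, contradicting $(Y_1 + Y_2) \cap S \neq \emptyset$; the second gives $MI \subseteq X$ and hence $Y_1 + MI \subseteq X$, contradicting $(Y_1 + MI) \cap S \neq \emptyset$.

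For the backward direction, I would assume $S = M - X$ is a $\phi$-$m$-system (in particular $X$ is proper) and take $Y \in S(M)$ and an ideal $I$ with $YI \subseteq X$ and $YI \nsubseteq \phi(X)$. Suppose for contradiction that $Y \nsubseteq X$ and $I \nsubseteq (X:_R M)$, so that $MI \nsubseteq X$. I would specialize the $\phi$-$m$-system property with $Y_1 = 0$ and $Y_2 = Y$: the hypotheses become $Y \cap S \neq \emptyset$, $MI \cap S \neq \emptyset$, and $YI \nsubseteq \phi(X)$, all of which hold. The conclusion then gives $YI \cap S \neq \emptyset$, directly contradicting $YI \subseteq X$.

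There is no serious technical obstacle once the identification $\langle S^c\rangle = X$ is in hand; the only subtlety is the strategic choice $Y_1 = 0$ in the reverse direction, which collapses the three $\phi$-$m$-system hypotheses to exactly the non-containments $Y \nsubseteq X$, $MI \nsubseteq X$, and $YI \nsubseteq \phi(X)$, so that the conclusion lands squarely against the defining assumption $YI \subseteq X$.
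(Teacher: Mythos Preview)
Your proposal is correct and follows essentially the same argument as the paper's own proof: the forward direction is a contradiction argument exploiting $Y_1\subseteq X$ and $Y_2I\subseteq X$ to invoke the $\phi$-prime hypothesis, and the backward direction specializes the $\phi$-$m$-system condition with $Y_1=0_M$, $Y_2=Y$. Your explicit remark that $\langle S^c\rangle = X$ and that $S\neq\emptyset$ corresponds to $X$ being proper is a useful clarification the paper leaves implicit.
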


\begin{proof}
	$\Longrightarrow:$ Suppose that $X$ is $\phi$-prime. Choose an ideal $I$ of
	$R$ and two submodules $Y_{1}$, $Y_{2}$ of $M$ with $(Y_{1}+Y_{2})\cap
	S\neq\emptyset,$ $(Y_{1}+MI)\cap S\neq\emptyset$ and $Y_{2}I\nsubseteq
	\phi(<S^{c}>),$ where $S^{c}=X.$ We show that $(Y_{1}+Y_{2}I)\cap
	S\neq\emptyset.$ If $(Y_{1}+Y_{2}I)\cap S=\emptyset,$ then $(Y_{1}%
	+Y_{2}I)\subseteq X,$ since $S=M-X.$ Then one can see $Y_{2}I\subseteq X$ and
	$Y_{1}\subseteq X.$ Also, by our hypothesis, $Y_{2}I\nsubseteq\phi
	(<S^{c}>)=\phi(X).$ Then as $X$ is $\phi$-prime, we get $Y_{2}\subseteq X$ or
	$I\subseteq(X:_{R}M).$ If $Y_{2}\subseteq X$, we see $Y_{1}+Y_{2}\subseteq X$,
	i.e., $(Y_{1}+Y_{2})\cap S=\emptyset,$ a contradiction. If $I\subseteq
	(X:_{R}M),$ then $MI\subseteq X,$ so we get $Y_{1}+MI\subseteq X$, i.e.,
	$(Y_{1}+MI)\cap S=\emptyset,$ a contradiction. Thus $(Y_{1}+Y_{2}I)\cap
	S\neq\emptyset.$
	
	$\Longleftarrow:$ Let $S=M-X$ be a $\phi$-$m$-system. Let $Y$ be a submodule
	of $M$ and $I$ be an ideal of $R$ such that $YI\subseteq X$ and $YI\nsubseteq
	\phi(X).$ Suppose that $Y\nsubseteq X$ and $I\nsubseteq(X:_{R}M).$ Then one
	can see $Y\cap S\neq\emptyset$ and $MI\cap S\neq\emptyset.$ In the definition of $\phi$-$m$-system, consider as
	$Y_{1}=0_{M}$ and $Y_{2}=Y.$ Then since $Y\cap S\neq\emptyset,$ $MI\cap
	S\neq\emptyset$ and $YI\nsubseteq\phi(X)=\phi(S^{c}),$ we obtain $YI\cap
	S=(0_{M}+YI)\cap S\neq\emptyset,$ by $S$ is a $\phi$-$m$-system. Therefore,
	$YI\cap S\neq\emptyset,$ but this contradicts with $YI\subseteq X.$
\end{proof}

\begin{proposition}
	For a proper $X\in S(M),$ let $S:=M-X$. The followings are equivalent:
	
	\begin{enumerate}
		\item $X$ is a $\phi$-prime submodule.
		
		\item If $(Y_{1}+Y_{2})\cap S\neq\emptyset,$ $MI\cap S\neq\emptyset$ and
		$Y_{2}I\nsubseteq\phi(S^{c}),$ for all $Y_{1},Y_{2}\in S(M)$ and any ideal
		$I$ of $R,$ then $(Y_{1}+Y_{2}I)\cap S\neq\emptyset$.
		
		\item If $Y_{2}\cap S\neq\emptyset,$ $MI\cap S\neq\emptyset$ and
		$Y_{2}I\nsubseteq\phi(S^{c}),$ for all $Y_{2}\in S(M)$ and any ideal $I$ of
		$R,$ then $Y_{2}I\cap S\neq\emptyset$.
	\end{enumerate}
\end{proposition}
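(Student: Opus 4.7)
The plan is to prove the three implications $(1)\Rightarrow(2)\Rightarrow(3)\Rightarrow(1)$, following the template already established in Proposition \ref{system} but with the three variations of the $m$-system hypothesis. The key technical observation is that $(Y_1+Y_2I)\cap S = \emptyset$ is equivalent to $Y_1+Y_2I \subseteq X$, and since $0_M\in Y_2I$, this forces both $Y_1\subseteq X$ and $Y_2I\subseteq X$ separately. This is the bridge that lets us feed the hypotheses into the $\phi$-prime property of $X$.

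For $(1)\Rightarrow(2)$, I would assume $X$ is $\phi$-prime and the three hypotheses: $(Y_1+Y_2)\cap S\neq\emptyset$, $MI\cap S\neq\emptyset$, and $Y_2I\nsubseteq \phi(S^c)=\phi(X)$. Suppose for contradiction that $(Y_1+Y_2I)\cap S=\emptyset$, so $Y_1+Y_2I\subseteq X$. By the observation above, $Y_1\subseteq X$ and $Y_2I\subseteq X$. Since $Y_2I\nsubseteq\phi(X)$ and $X$ is $\phi$-prime, either $Y_2\subseteq X$ or $I\subseteq(X:_RM)$. In the first case $Y_1+Y_2\subseteq X$, contradicting $(Y_1+Y_2)\cap S\neq\emptyset$; in the second case $MI\subseteq X$, contradicting $MI\cap S\neq\emptyset$.

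For $(2)\Rightarrow(3)$, the route is immediate: specialize (2) at $Y_1=0_M$. Then $(0_M+Y_2)\cap S = Y_2\cap S\neq\emptyset$, the remaining hypotheses of (2) coincide with those of (3), and the conclusion $(0_M+Y_2I)\cap S = Y_2I\cap S\neq\emptyset$ is precisely the conclusion of (3).

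For $(3)\Rightarrow(1)$, I would imitate the argument in the second half of Proposition \ref{system}. Take any ideal $I$ of $R$ and $Y\in S(M)$ with $YI\subseteq X$ and $YI\nsubseteq\phi(X)$, and suppose for contradiction that $Y\nsubseteq X$ and $I\nsubseteq(X:_RM)$. These give $Y\cap S\neq\emptyset$ and $MI\cap S\neq\emptyset$; moreover $YI\nsubseteq\phi(X)=\phi(S^c)$. Applying (3) with $Y_2=Y$ yields $YI\cap S\neq\emptyset$, contradicting $YI\subseteq X$. I expect no real obstacle: the argument is a routine three-step cycle parallel to Proposition \ref{system}, and the only point to watch is the transition in $(1)\Rightarrow(2)$ where one uses $0_M\in Y_2I$ to separate $Y_1+Y_2I\subseteq X$ into the two containments needed.
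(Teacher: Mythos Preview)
Your proposal is correct and follows essentially the same cycle $(1)\Rightarrow(2)\Rightarrow(3)\Rightarrow(1)$ as the paper; the arguments for $(2)\Rightarrow(3)$ and $(3)\Rightarrow(1)$ are identical to the paper's. The only cosmetic difference is in $(1)\Rightarrow(2)$: the paper invokes Proposition~\ref{system} to conclude that $S=M-X$ is a $\phi$-$m$-system and then observes $MI\cap S\neq\emptyset$ implies $(Y_1+MI)\cap S\neq\emptyset$, whereas you reprove that implication directly from the $\phi$-prime definition---but your direct argument is precisely the content of the forward direction of Proposition~\ref{system}, so nothing substantive is different.
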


\begin{proof}
	$(1)\Longrightarrow(2)$ : Assume that $(Y_{1}+Y_{2})\cap S\neq\emptyset,$ $MI\cap S\neq\emptyset$ and
	$Y_{2}I\nsubseteq\phi(S^{c})$ for all $Y_{1},Y_{2}\in S(M)$ and any ideal $I$ of $R.$ Since $X$ is a $\phi$-prime submodule, by Proposition \ref{system}, we know $S=M-X$ is a $\phi$-$m$-system. Also, since $MI\cap S\neq\emptyset,$ $(Y_{1}+MI)\cap S\neq\emptyset.$ Thus, by the definition of $\phi$-$m$-system, $(Y_{1}+Y_{2}I)\cap S\neq\emptyset$.
	
	$(2)\Longrightarrow(3)$ : Set $Y_{1}=0_{M}.$
	
	$(3)\Longrightarrow(1)$ : Suppose that $Y\in S(M)$ and $I$ is an ideal of
	$R$ with $YI\subseteq X$, $YI\nsubseteq\phi(X).$ Let $Y\nsubseteq
	X$ and $I\nsubseteq(X:_{R}M).$ Since $Y\nsubseteq X,$ we have $Y\cap
	S\neq\emptyset.$ Also, as $I\nsubseteq(X:_{R}M),$ i.e., $MI\nsubseteq X$, one
	can see $MI\cap S\neq\emptyset.$ Thus, since $Y\cap S\neq\emptyset, MI\cap
	S\neq\emptyset$ and $YI\nsubseteq\phi(X)=\phi(S^{c}),$ we obtain
	$YI\cap S\neq\emptyset$ by (3). This contradicts with $YI\subseteq X.$
	Hence we are done.
\end{proof}

\begin{definition}
	For $\phi:S(M)\rightarrow S(M)\cup\{\emptyset\},$
	
	\begin{enumerate}
		\item The function $\phi$ is called containment preserving, if for any
		two submodules $X_{1},X_{2}\in S(M),$ $X_{1}\subseteq X_{2}$ implies
		$\phi(X_{1})\subseteq\phi(X_{2}).$
		
		\item The function $\phi$ is called sum preserving, if $\phi(\sum X_{i}%
		)=\sum\phi(X_{i}),$ for all $X_{i}\in S(M).$
	\end{enumerate}
\end{definition}

\begin{lemma}
	Let $\phi$ be containment preserving. Assume that $S\subseteq M$ is a $\phi
	$-$m$-system and $X\in S(M)$ maximal with respect to $X\cap S=\emptyset$ and
	$\phi(X)=\phi(<S^{c}>).$ Then $X$ is a $\phi$-prime submodule of $M$.
\end{lemma}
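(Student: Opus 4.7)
The plan is to prove the contrapositive by setting up a specific application of the $\phi$-$m$-system axiom that produces an element inside $X \cap S$. Concretely, I would assume $X$ fails to be $\phi$-prime and extract $Y \in S(M)$ and an ideal $I$ of $R$ with $YI \subseteq X$, $YI \nsubseteq \phi(X)$, $Y \nsubseteq X$, and $I \nsubseteq (X:_{R} M)$. The goal is then to feed $Y_{1} = X$, $Y_{2} = Y$, together with this $I$, into the definition of $\phi$-$m$-system.

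The three hypotheses of the $\phi$-$m$-system condition that I need to verify are $(X + Y) \cap S \neq \emptyset$, $(X + MI) \cap S \neq \emptyset$, and $YI \nsubseteq \phi(\langle S^{c} \rangle)$. The last is immediate from the equality $\phi(X) = \phi(\langle S^{c} \rangle)$ together with $YI \nsubseteq \phi(X)$. For the first two I will invoke maximality of $X$. Since $Y \nsubseteq X$ we have $X \subsetneq X + Y$, and since $I \nsubseteq (X:_{R} M)$ we have $MI \nsubseteq X$, hence $X \subsetneq X + MI$. Maximality of $X$ among submodules satisfying both $X \cap S = \emptyset$ and $\phi(X) = \phi(\langle S^{c} \rangle)$ says that any proper enlargement must violate \emph{at least one} of these two conditions.

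The key step, and the main place where the containment-preserving hypothesis enters, is ruling out that the violation occurs in the $\phi$-equality. Suppose for contradiction that $(X + Y) \cap S = \emptyset$; then $X + Y \subseteq S^{c} \subseteq \langle S^{c} \rangle$, so containment-preservation gives $\phi(X + Y) \subseteq \phi(\langle S^{c} \rangle)$. On the other hand $X \subseteq X + Y$ gives $\phi(\langle S^{c} \rangle) = \phi(X) \subseteq \phi(X + Y)$, forcing $\phi(X + Y) = \phi(\langle S^{c} \rangle)$, contradicting maximality of $X$. Therefore $(X + Y) \cap S \neq \emptyset$, and the identical argument applied to $X + MI$ yields $(X + MI) \cap S \neq \emptyset$.

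Now I apply the $\phi$-$m$-system property of $S$ with $Y_{1} = X$, $Y_{2} = Y$ and the ideal $I$: the three hypotheses being verified, the definition supplies $(X + YI) \cap S \neq \emptyset$. But $YI \subseteq X$ collapses $X + YI$ to $X$, so $X \cap S \neq \emptyset$, contradicting the assumption $X \cap S = \emptyset$. The main obstacle worth flagging is precisely the manoeuvre in the previous paragraph: without containment-preservation, maximality of $X$ with respect to a conjunction of two properties is too weak on its own to derive that any proper enlargement meets $S$, and the clean way to collapse the ``$\phi$-equality'' alternative into the ``meets $S$'' alternative is exactly through the inclusion $X + Y \subseteq \langle S^{c} \rangle$ supplied by the failure $(X+Y) \cap S = \emptyset$.
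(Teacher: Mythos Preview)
Your proof is correct and follows essentially the same approach as the paper's own argument: both choose $Y_{1}=X$, $Y_{2}=Y$ in the $\phi$-$m$-system definition, use containment preservation together with maximality to force $(X+Y)\cap S\neq\emptyset$ and $(X+MI)\cap S\neq\emptyset$ via the chain $\phi(\langle S^{c}\rangle)=\phi(X)\subseteq\phi(X+Y)\subseteq\phi(\langle S^{c}\rangle)$, and then collapse $(X+YI)\cap S\neq\emptyset$ to the contradiction $X\cap S\neq\emptyset$. Your write-up is in fact slightly more explicit than the paper's about where containment preservation is used.
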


\begin{proof}
	Let $I$ be any ideal of $R$ and $Y\in S(M)$ such that $YI\subseteq X$ and
	$YI\nsubseteq\phi(X).$ Let $Y\nsubseteq X$ and $I\nsubseteq(X:_{R}M).$ Then as
	$Y\nsubseteq X,$ one can see $X\subsetneq X+Y.$ We show that $(X+Y)\cap
	S\neq\emptyset.$ Indeed, if $(X+Y)\cap S=\emptyset,$ then $X+Y\subseteq
	S^{c},$ so $X+Y\subseteq<S^{c}>$. Thus, $\phi(<S^{c}>)=\phi(X)\subseteq
	\phi(X+Y)\subseteq$ $\phi(<S^{c}>),$ i.e., $\phi(X+Y)=\phi(<S^{c}>).$ This
	doesn't happen because of the properties of $X.$ Also, as $I\nsubseteq
	(X:_{R}M),$ i.e., $MI\nsubseteq X,$ we have $X\subsetneq X+MI.$ We show that
	$(X+MI)\cap S\neq\emptyset.$ Indeed, if $(X+MI)\cap S=\emptyset,$ then similar
	the above, we obtain $\phi(X+MI)=\phi(<S^{c}>)$, a contradiction. Thus, since
	$YI\nsubseteq\phi(X)=\phi(<S^{c}>),(X+Y)\cap S\neq\emptyset$ and $(X+MI)\cap
	S\neq\emptyset,$ one obtains $(X+YI)\cap S\neq\emptyset,$ by $S$ is a $\phi
	$-$m$-system. Then as $YI\subseteq X,$ one gets $X\cap S\neq\emptyset.$ This
	gives us a contradiction. Consequently, one can see that $Y\subseteq X$ or
	$I\subseteq(X:_{R}M)$
\end{proof}

\begin{definition}
	\label{def radical}Let $Y\in S(M).$ If there is a $\phi$-prime submodule $X$
	contains $Y$ such that $\phi(Y)=\phi(X),$ then we define the radical of $Y$ as :
	
	$\sqrt{Y}:=\{x\in M:$ every $\phi$-$m$-system $S$ containing $x$ such that
	$\phi(Y)=\phi(<S^{c}>)$ meets $Y\},$ otherwise $\sqrt{Y}:=M.$
\end{definition}

\begin{theorem}
	\label{the radical}Let $\phi$ be containment and sum preserving. For $Y\in
	S(M),$ let $\Omega:=\{X_{i}\in S(M):X_{i}$ is $\phi$-prime with $Y\subseteq
	X_{i}$ and $\phi(Y)=\phi(X_{i}),$ for $i\in\Lambda$ $\}.$ Then we have
	\[
	\sqrt{Y}=\underset{X_{i}\in\Omega}{\bigcap}X_{i}.
	\]
	
\end{theorem}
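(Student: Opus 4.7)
The plan is to prove double containment, with the easy direction $\sqrt{Y}\subseteq\bigcap_{X_i\in\Omega} X_i$ coming directly from Proposition \ref{system}, and the harder reverse direction requiring a Zorn's lemma argument followed by the lemma preceding Definition \ref{def radical}. I would first dispose of the trivial case $\Omega=\emptyset$: here $\sqrt{Y}=M$ by Definition \ref{def radical}, and the empty intersection is also $M$, so both sides agree. Assume henceforth that $\Omega\neq\emptyset$, so $\sqrt{Y}$ is given by the set-builder formula.

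For $\sqrt{Y}\subseteq\bigcap_{X_i\in\Omega}X_i$: take $x\in\sqrt{Y}$ and fix any $X_i\in\Omega$. Suppose for contradiction that $x\notin X_i$. Put $S:=M-X_i$; since $X_i$ is $\phi$-prime, Proposition \ref{system} gives that $S$ is a $\phi$-$m$-system, and moreover $x\in S$. Because $S^c=X_i\in S(M)$, we have $\langle S^c\rangle=X_i$, so $\phi(\langle S^c\rangle)=\phi(X_i)=\phi(Y)$. Thus $S$ is a $\phi$-$m$-system containing $x$ satisfying the hypothesis in the definition of $\sqrt{Y}$, hence $S\cap Y\neq\emptyset$. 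But $Y\subseteq X_i=S^c$, contradicting $S\cap Y\neq\emptyset$.

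For the reverse containment $\bigcap_{X_i\in\Omega}X_i\subseteq\sqrt{Y}$: take $x\in\bigcap_{X_i\in\Omega}X_i$ and suppose for contradiction $x\notin\sqrt{Y}$. Then there exists a $\phi$-$m$-system $S$ with $x\in S$, $\phi(\langle S^c\rangle)=\phi(Y)$, and $S\cap Y=\emptyset$. Consider the family
\[
\mathcal{F}:=\{X\in S(M):Y\subseteq X,\; X\cap S=\emptyset,\; \phi(X)=\phi(\langle S^c\rangle)\}.
\]
Note $Y\in\mathcal{F}$ (using $Y\subseteq S^c$ and $\phi(Y)=\phi(\langle S^c\rangle)$), so $\mathcal{F}\neq\emptyset$. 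For any chain $\{X_\alpha\}\subseteq\mathcal{F}$, the union $X^*=\sum X_\alpha$ is again in $\mathcal{F}$: clearly $Y\subseteq X^*$ and $X^*\cap S=\emptyset$, and since $\phi$ is sum preserving, $\phi(X^*)=\sum\phi(X_\alpha)=\phi(\langle S^c\rangle)$. By Zorn's lemma, $\mathcal{F}$ has a maximal element $X^*$. Applying the lemma immediately before Definition \ref{def radical} (which needs precisely that $\phi$ is containment preserving and that $X^*$ is maximal with $X^*\cap S=\emptyset$ and $\phi(X^*)=\phi(\langle S^c\rangle)$), we conclude $X^*$ is $\phi$-prime. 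Since $Y\subseteq X^*$ and $\phi(X^*)=\phi(\langle S^c\rangle)=\phi(Y)$, we have $X^*\in\Omega$. Hence $x\in X^*$ by assumption, but $x\in S$ and $X^*\cap S=\emptyset$, a contradiction.

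The main obstacle is setting up the Zorn argument correctly; in particular, the sum-preserving hypothesis on $\phi$ is what guarantees that the union $X^*$ of a chain in $\mathcal{F}$ still satisfies $\phi(X^*)=\phi(\langle S^c\rangle)$, and the containment-preserving hypothesis is then precisely what the pre-existing lemma needs to upgrade the maximal element to a $\phi$-prime submodule. Everything else reduces to unwinding definitions and using $\langle S^c\rangle=X_i$ whenever $S^c$ is already a submodule.
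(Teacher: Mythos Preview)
Your proof is correct and follows essentially the same route as the paper's: Proposition \ref{system} for the inclusion $\sqrt{Y}\subseteq\bigcap_{X_i\in\Omega}X_i$, and a Zorn's Lemma argument on the very same family (the paper calls it $\Delta$) together with the preceding lemma for the reverse inclusion. Your version is in fact more explicit than the paper's, which neither isolates the case $\Omega=\emptyset$ nor expressly invokes the lemma to certify that the maximal element of $\Delta$ is $\phi$-prime (and hence lies in $\Omega$).
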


\begin{proof}
	Assume that $\sqrt{Y}\neq M.$ Choose $x\in\sqrt{Y}$ and $X_{i}\in\Omega.$ By
	Proposition \ref{system}, we know $S=M-X_{i}$ is a $\phi$-$m$-system. As
	$S\cap Y=\emptyset$ and $x\in\sqrt{Y},$ we have $x\notin S.$ Thus $x\in X_{i}$
	and so $\sqrt{Y}\subseteq\underset{X_{i}\in\Omega}{\bigcap}X_{i}.$ For the
	other containment, choose $y\notin\sqrt{Y}.$ Thus, there is a $\phi$
	-$m$-system $S$ in $M$ with $y\in S,$ $\phi(Y)=\phi(<S^{c}>)$ and $S\cap
	Y=\emptyset.$ Let us consider, the following set :%
	
	\[
	\Delta:=\{X_{i}\in S(M):Y\subseteq X_{i},\text{ }S\cap X_{i}=\emptyset\text{
		and }\phi(X_{i})=\phi(<S^{c}>)\}
	\]
	One can see clearly, $Y\in\Delta,$ so $\Delta\neq\emptyset.$ Let
	$X_{1}\subseteq X_{2}\subseteq\cdot\cdot\cdot\subseteq X_{n}\subseteq
	\cdot\cdot\cdot$ be a chain in $\Delta.$ Then it is easy to see that
	$Y\subseteq\underset{}{\bigcup}X_{i}$ and $S\cap\underset{}{(\bigcup}%
	X_{i})=\emptyset.$ Also, since $\phi$ is containment and sum preserving with
	$\phi(X_{i})=\phi(<S^{c}>),$ one can see $\phi(\underset{}{\bigcup}X_{i}%
	)=\phi(<S^{c}>).$ Thus $\underset{}{\bigcup}X_{i}$ $\in\Delta.$ Hence, by Zorn`s Lemma, $\Delta$ has
	a maximal element, say $X_{i_{1}}.$ Then $y\notin X_{_{i_{1}}},$ since $y\in S$ and $S\cap X_{_{i_{1}}
	}=\emptyset.$ Thus $y\notin$ $\underset{X_{i}\in\Omega}{\bigcap}X_{i},$ so we
	obtain $\underset{X_{i}\in\Omega}{\bigcap}X_{i}\subseteq\sqrt{Y}.$
\end{proof}

\end{document}